\newtheorem{theorem}{Theorem}[section]
\newtheorem{corollary}{Corollary}[section]
\newtheorem{lemma}{Lemma}[section]
\newtheorem{remark}{Remark}[section]
\newtheorem{proposition}{Proposition}[section]
\def \a{\alpha }
\def \l{\lambda }
\begin{document}

\newcommand{\wta}{{\rm {wt} }  a }
\newcommand{\R}{\frak R}
\newcommand{\cV}{\mathcal V}
\newcommand{\cA}{\mathcal A}
\newcommand{\cL}{\mathcal L}
\newcommand{\J}{\mathcal H}
\newcommand{\G}{\mathcal G}
\newcommand{\wtb}{{\rm {wt} }  b }
\newcommand{\bea}{\begin{eqnarray}}
\newcommand{\eea}{\end{eqnarray}}
\newcommand{\be}{\begin {equation}}
\newcommand{\ee}{\end{equation}}
\newcommand{\g}{\frak g}
\newcommand{\hg}{\hat {\frak g} }
\newcommand{\hn}{\hat {\frak n} }
\newcommand{\h}{\frak h}
\newcommand{\U}{\mathcal U}
\newcommand{\hh}{\hat {\frak h} }
\newcommand{\n}{\frak n}
\newcommand{\Z}{\Bbb Z}
\newcommand{\N}{{\Bbb Z} _{> 0} }
\newcommand{\Zp} {\Z _ {\ge 0} }
\newcommand{\C}{\Bbb C}
\newcommand{\Q}{\Bbb Q}
\newcommand{\vak}{\bf 1}
\newcommand{\la}{\langle}
\newcommand{\ra}{\rangle}
\newcommand{\NS}{\bf{ns} }

\newcommand{\hf}{\mbox{$\tfrac{1}{2}$}}
\newcommand{\thf}{\mbox{$\tfrac{3}{2}$}}

\newcommand{\W}{\mathcal{W}}
\newcommand{\non}{\nonumber}
\def \l {\lambda}
\baselineskip=14pt
\newenvironment{demo}[1]%
{\vskip-\lastskip\medskip
  \noindent
  {\em #1.}\enspace
  }%
{\qed\par\medskip
  }

\def \l {\lambda}
\def \a {\alpha}

\keywords{vertex superalgebras, affine Lie algebras, Clifford
algebra, Weyl algebra, lattice vertex algebras,  critical level}
\title[]{
 A classification of irreducible Wakimoto modules for the affine Lie algebra $A_1 ^{(1)}$  }

  \subjclass[2000]{
Primary 17B69, Secondary 17B67, 17B68, 81R10}
\author{ Dra\v zen Adamovi\' c }

\date{}
\curraddr{Department of Mathematics, University of Zagreb,
Bijeni\v cka 30, 10 000 Zagreb, Croatia} \email {adamovic@math.hr}
\markboth{Dra\v zen Adamovi\' c} { }
\bibliographystyle{amsalpha}
\pagestyle{myheadings} \maketitle

\def \l {\lambda}
\def \a {\alpha}

\begin{abstract}
By using methods developed in Adamovi\' c (Comm. Math. Phys. 270 (2007) 141-161) we study  the
irreducibility of certain  Wakimoto modules for $\widehat{sl_2}$
at the critical level. We classify all $\chi \in {\C}((z))$ such
that the corresponding Wakimoto module $W_{\chi}$ is irreducible.
It turns out that zeros of  Schur polynomials play important rule
in the classification result.
\end{abstract}

\maketitle

\section{Introduction}
 In the representation theory of affine Kac-Moody Lie algebras,
represenations at the critical level belong to one of    the most
important cases. The Kac-Kazhdan conjecture for characters
motivates explicit realizations of  irreducible  highest weight
modules at the critical level. These representations can be
realized by using Wakimoto modules (cf. \cite{efren}, \cite{FF0},
\cite{FF1}, \cite{FP}, \cite{Scz}, \cite{W-mod}).  In \cite{A-2007} we
introduced an infinite-dimensional Lie superalgebra ${\cA}$
which is a certain limit of N=2 superconformal algebras obtained
by using Kazama-Suzuki mappings (cf. \cite{A3}, \cite{FST},
\cite{KS}). We also constructed a family of functors which send
irreducible ${\cA}$--modules to irreducible modules for the affine
Lie algebra $A_1 ^{(1)}$ at the critical level. By using this
construction we proved irreducibility of a large family of
Wakimoto modules $W_{\chi}$ parameterized by $\chi \in {\C}((z))$.
In this paper we shall completely solve the irreducibility problem
for Wakimoto modules $W_{\chi}$. We shall describe all $\chi \in
{\C}((z))$ such that $W_{\chi}$ is irreducible.

We first consider ${\cA}$--modules $\widetilde{F}_{\chi}$
constructed by using representations of  the infinite-dimensional
Clifford algebra and also parameterized by $\chi \in {\C}((z))$.
The functor $\mathcal{L}_0$ sends $\widetilde{F}_{\chi}$ to the
Wakimoto module $W_{-\chi}$ (cf. \cite{A-2007}). Then $W_{-\chi}$
is  irreducible $A_1 ^{(1)}$--module if and only if
$\widetilde{F}_{\chi}$ is irreducible ${\cA}$--module (cf.
Theorems \ref{ired-w-1} and \ref{red-w-1}). So we only need to
classify $\chi \in {\C}((z))$ such that $\widetilde{F}_{\chi}$ is
irreducible. By combining results from \cite{A-2007} and results
from the present paper, we obtain the following classification
result.

 \begin{theorem}
Assume that $\chi \in {\C}((z))$. Then the Wakimoto module
$W_{-\chi}$ is an irreducible $A_1 ^{(1)}$--module (resp.
$\widetilde{F}_{\chi}$ is irreducible ${\cA}$--module) if and only
if $\chi$ satisfies one of the following conditions:
\item[(i)] There is $p \in {\Zp}$, $p \ge 1 $ such that
$$ {\chi}(z) = \sum_{n=-p} ^{\infty} {\chi}_{-n} z ^{n-1} \in
{\C}((z)) \quad \mbox{and} \quad
\chi_p \ne 0. $$

\item[(ii)] $$ {\chi}(z) = \sum_{n=0} ^{\infty} {\chi}_{-n} z
^{n-1} \in {\C}((z)) \quad \mbox{and} \quad
\chi_0 \in \{1 \} \cup ({\C} \setminus {\Z}). $$
\item[(iii)] There is $\ell \in {\N}$ such that
$$ {\chi}(z) =  \frac{\ell +1}{z} + \sum_{n=1} ^{\infty} {\chi}_{-n} z ^{n-1} \in
{\C}((z))$$
and $S_{\ell}(-\chi) \ne 0$,
where $S_{\ell}(-\chi) = S_{\ell}(-\chi_{-1}, -\chi_{-2}, \dots)$
is a Schur polynomial.
\end{theorem}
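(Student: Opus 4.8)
The plan is to exploit the reduction recorded in the excerpt: by the functor $\cL_0$ and Theorems \ref{ired-w-1} and \ref{red-w-1}, the Wakimoto module $W_{-\chi}$ is an irreducible $A_1^{(1)}$-module if and only if $\widetilde{F}_{\chi}$ is an irreducible $\cA$-module, so it is enough to classify the $\chi$ for which $\widetilde{F}_{\chi}$ is simple. I would organize the argument according to the order of the pole of $\chi$ at $z=0$, since this order carries the ``weight data'' of $\widetilde{F}_{\chi}$: (a) $\chi$ has a pole of order $\ge 2$, which is exactly case (i); and (b) $\chi$ has at most a simple pole, i.e. $\chi \in z^{-1}\C[[z]]$, in which case the residue $\chi_0$ is the decisive parameter and one splits further according to whether $\chi_0 \notin \Z$ or $\chi_0 = 1$ (both part of case (ii)), $\chi_0 \in \Z$ with $\chi_0 \ge 2$ (case (iii), with $\ell := \chi_0 - 1 \in \N$), or $\chi_0 \in \Z$ with $\chi_0 \le 0$. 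The statement then amounts to: irreducible in (a) and in (b) with $\chi_0 \notin \Z$ or $\chi_0 = 1$; irreducible in (b) with $\chi_0 = \ell+1 \ge 2$ iff $S_\ell(-\chi) \ne 0$; and reducible in (b) with $\chi_0 \le 0$.

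In case (a) and in case (b) with $\chi_0 \notin \Z$ I would invoke the irreducibility results of \cite{A-2007} directly: there $\widetilde{F}_{\chi}$ is shown to be simple by a cyclicity argument, and these $\chi$ form the ``large family'' already settled. The genuinely new content is confined to the resonant range $\chi_0 \in \Z$, where for each such $\chi$ one must either exhibit a proper $\cA$-submodule of $\widetilde{F}_{\chi}$ or prove that $\widetilde{F}_{\chi}$ is generated by every nonzero vector, and --- the delicate point --- identify the precise dividing locus as the zero set of a Schur polynomial.

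For this I would work inside the explicit realization of $\widetilde{F}_{\chi}$ on a representation of the infinite-dimensional Clifford algebra: the underlying space carries a charge grading, the untwisted generators of $\cA$ act with definite charge and degree, and the $\chi$-dependence enters only through the negative Fourier modes $\chi_{-1},\chi_{-2},\dots$ which shift one of the fields. When $\chi_0 = \ell+1$ with $\ell \ge 1$ I would write down a candidate singular vector $v_\ell$, a fixed polynomial of weighted degree $\ell$ in the negative fermionic modes applied to the generating vector, and compute the action on $v_\ell$ of the annihilation-type generators of $\cA$. The expectation is that the charge and degree constraints force all but one of these actions to vanish identically, and that the remaining one yields a single scalar which, after the normal-ordering combinatorics of the Fock space is unwound, is exactly the Schur polynomial $S_\ell(-\chi_{-1},-\chi_{-2},\dots)$; this is the step where the generating identity $\sum_{m\ge 0} S_m(x_1,x_2,\dots)\,t^m = \exp\bigl(\sum_{n\ge 1} x_n t^n\bigr)$ enters through the vertex-operator expansion. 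Thus $S_\ell(-\chi)=0$ makes $v_\ell$ a genuine singular vector and $\widetilde{F}_{\chi}$ reducible, while $S_\ell(-\chi)\ne 0$ lets the cyclicity mechanism of \cite{A-2007} run once more and gives irreducibility. The value $\chi_0 = 1$ is the boundary instance $\ell = 0$, where $S_0 = 1 \ne 0$, so it is automatically irreducible and appears in (ii) without a Schur condition; and for $\chi_0 \in \Z$ with $\chi_0 \le 0$ (including $\chi_0 = 0$) the Clifford picture exhibits a canonical proper submodule with no dependence on the higher modes of $\chi$, so every remaining $\chi$ gives a reducible $\widetilde{F}_{\chi}$ and the list (i)--(iii) is exhaustive.

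The main obstacle I expect is the computation just described: proving that the obstruction to $v_\ell$ being singular reduces to a single scalar, and recognizing that scalar precisely as $S_\ell(-\chi)$ rather than some other polynomial. This requires careful bookkeeping of the twisted fields in the definition of $\widetilde{F}_{\chi}$, and a proof that $v_\ell$ is the unique possible extremal vector in its charge sector so that no further, non-Schur, obstruction can arise. A secondary technical point is to check that the cyclicity argument of \cite{A-2007}, tailored to generic $\chi$, still applies throughout the resonant range once the single Schur obstruction is known to be nonzero.
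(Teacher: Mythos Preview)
Your outline is correct and essentially matches the paper's proof: the reduction to $\widetilde{F}_\chi$ via $\mathcal{L}_0$, the case split by pole order, the appeal to \cite{A-2007} for the non-resonant cases, the Schur-polynomial obstruction in the resonant range $\chi_0 = \ell+1 \ge 2$, and the unconditional reducibility for $\chi_0 \le 0$ are all exactly what the paper does. The only differences in execution are that the paper obtains $S_\ell(-\chi)$ via the determinantal formula for Schur polynomials (by computing $G^-(\tfrac{1}{2})\cdots G^-(\ell-\tfrac{1}{2})$ acting on the explicit vector $\Omega_\ell = \Psi^+(-\ell-\tfrac{1}{2})\cdots\Psi^+(-\tfrac{3}{2}){\bf 1}$) rather than through the exponential generating identity, and that the cyclicity step you flag as secondary is settled by a short new lemma --- any nonzero $\mathcal{A}$-submodule of $\widetilde{F}_\chi$ contains some $\Omega_s$, proved using only the $G^+$-action --- rather than by importing the argument of \cite{A-2007} wholesale.
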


We also  prove that when the Wakimoto module $W_{-\chi}$ is
reducible, then it contains an irreducible submodule.

Although the methods used in this paper can be mainly applied for
the affine Lie algebra $A_1 ^{(1)}$, we believe that the main
classification result can be extended for higher rank case. We
hope to study this problem in our future publications.

We would like to thank the referee for his valuable comments.

\section{ Clifford vertex superalgebras}

   The Clifford
algebra $CL$ is a complex associative algebra generated by
$$ \Psi^{\pm}(r) , \  r  \in \hf + {\Z},$$ and relations
\bea
&& \{\Psi^{\pm}(r) , \Psi^{\mp}(s) \} = \delta_{r+s,0}; \quad
 \{\Psi^{\pm}(r) , \Psi^{\pm}(s)\}=0
\nonumber
\eea
where $r, s \in {\hf}+ {\Z}$.

 Let $F$ be the irreducible $CL$--module generated by
 the
cyclic vector ${\vak}$ such that
$$ \Psi^{\pm} (r) { \vak} = 0 \quad
\mbox{for} \ \ r > 0 .$$
%
%A basis of $F$ is given by
%
%$$ \Psi^{- }({-n_1-{\hf}})  \cdots \Psi^{-}({-n_r-{\hf}})  \Psi^{+}({-k_1-{\hf}})  \cdots \Psi^{+}({-k_s-{\hf}})
% {\vak} $$
%
%where $n_i, k_i \in {\Zp}$,  $n_1 >n_2 >\cdots >n_r  $, $k_1 >k_2
%>\cdots
%>k_s $.

As a vector space,
$$F = \bigwedge ( \Psi^{-} (-n - \tfrac{1}{2} ), n \in {\Zp} ) \otimes  \bigwedge ( \Psi^{+} (-n - \tfrac{1}{2} ), n \in {\Zp} )$$
where $ \bigwedge (x_i, i \in I) $ denotes the exterior algebra with generators $x_i, i \in I$.

Define the following   fields on $F$
$$   \Psi^{+}(z) = \sum_{ n \in   {\Z}
 } \Psi^{+}(n+{\hf} )  z ^{-n- 1}, \quad  \Psi^{-} (z) = \sum_{ n \in {\Z}
 } \Psi^{-} (n+{\hf} )  z ^{-n-1}.$$

 The fields $\Psi^{+}(z)$ and $\Psi^{-}(z)$ generate on $F$  the
unique structure of a simple vertex superalgebra (cf.  \cite{K},
\cite{FB}).

Define the following Virasoro vector in $F$ :

$$ \omega ^{(f)} = \frac{1}{2} ( \Psi ^{+} (-\tfrac{3}{2} ) \Psi
^{-}(-\tfrac{1}{2}) +  \Psi ^{-} (-\tfrac{3}{2} ) \Psi
^{+}(-\tfrac{1}{2})) {\vak}.$$

Then the components of the field  $L^{(f)}(z) = Y(\omega^{(f)},z)
= \sum_{n \in {\Z} } L^{(f)}(n) z^{-n-2}$ defines on $F$ a
representation of the Virasoro algebra with central charge $1$.

Set $$J^{(f)}(z)= Y(\Psi^{+}(-{\hf})\Psi^{-}(-{\hf}){\vak},z)=
\sum_{n \in {\Z} } J^{(f)}(n) z^{-n-1}.$$

Then we have
$$ [J^{(f)}(n), \Psi^{\pm} (m+ {\hf})] = \pm \Psi^{\pm} (m+n +
{\hf}).$$
 Let $\widetilde{F} = \mbox{Ker}_F \Psi ^{-}(\hf)$
be the subalgebra of the vertex superalgebra $F$ generated by the
fields
$$ \partial \Psi^{+}(z)=\sum_{n \in {\Z} } -n \Psi^{+}(n - {\hf}) z^{-n-1} \ \ \mbox{and} \ \
 \Psi^{-}(z) = \sum_{ n \in {\Z} }   \Psi^{-}(n+ {\hf})
z^{-n-1}. $$

%
% ......... nova verzija..............
%

Then $\widetilde{F}$ is a simple vertex superalgebra and it is    $\hf {\Zp}$--graded with respect to the
operator $L^{f}(0)$. Let us describe the basis of $\widetilde{F}$.  A superpartition is a sequence $\lambda = (\lambda_n)_{n \in \N}$  in $ S \cup \{0\} $, $S \subset \Q_+$,  such that
$$ \lambda_1 > \lambda_2 > \cdots   \qquad \mbox{and} \quad \lambda_n = 0 \quad \mbox{for} \ n \ \mbox{sufficiently large}. $$
Define the length of partition by  $\ell (\lambda) = \max\{ n \ \vert \ \lambda_n \ne 0 \}$. If $\ell(\lambda) = \ell$ we write $\lambda= (\lambda_1, \dots, \lambda_\ell)$. Let $\phi$ denotes the
superpartition with all the entries being zero. Then we define $\ell (\phi) = 0$.

Let $\mathcal{P}$ be   the set of all superpartitions in $(\tfrac{1}{2} +  {\Zp})\cup \{ 0\} $ and $\overline{\mathcal{P}}$ be   the set of all superpartitions in $(\tfrac{3}{2} +  {\Zp} ) \cup \{0\} $.
Then we have $$\mathcal{P}= \cup_{r=0} ^ {\infty}  \mathcal{P}_r, \quad \overline{\mathcal{P} } = \cup_{r =0} ^{\infty}    \overline{\mathcal{P} }_r $$
where $\mathcal{P}_0   =   \overline{\mathcal{P} }_0 = \{ \phi \}$, and
\bea
&& \mathcal{P}_r   = \{ \lambda = (\lambda_1, \dots, \lambda_r) \in (\tfrac{1}{2} + \Z) ^r  \vert   \ \lambda_1 > \lambda_2 > \cdots > \lambda_r \ge 1/2 \}   \nonumber \\
 && \overline{\mathcal{P} }_r   = \{ \lambda = (\lambda_1, \dots, \lambda_r) \in (\tfrac{1}{2} + \Z) ^r  \vert \   \ \lambda_1 > \lambda_2 > \cdots > \lambda_r \ge 3/2 \} .\nonumber
  \eea

For  $\lambda = (\lambda_1, \dots, \lambda_r) \in \mathcal{P}_r  $, $\mu = (\mu_1, \dots, \mu_s) \in  \overline{\mathcal{P} }_s$
  we set
\bea && v_{\lambda, \mu} := \Psi^{- }({-\lambda_1})  \cdots
\Psi^{-}({-\lambda_r}) \Psi^{+}({-\mu_1})  \cdots
\Psi^{+}({-\mu_s})
 {\vak}  \nonumber \\
 && v_{\lambda, \phi} := \Psi^{- }({-\lambda_1})  \cdots
\Psi^{-}({-\lambda_r})
 { \vak}, \quad v_{\phi, \mu} :=   \Psi^{+}({-\mu_1})  \cdots
\Psi^{+}({-\mu_s})
 {\vak}, \nonumber \\
 &&  v_{\phi, \phi} = {\vak}. \nonumber
 \eea
 Then the set
\bea \label{baza-tilde} \{ v_{\lambda, \mu} \ \vert (\lambda, \mu) \in \mathcal{P} \times \overline{\mathcal{P} } \} \eea
 is a basis of $\widetilde{F}$.

 %Introduce the total order on the set  $\mathcal{P}$ (resp.   $\overline{\mathcal{P} }$ ) by
% $$ \lambda = (\lambda_1, \dots, \lambda_r) > \nu = (\nu_1, \dots, \nu_s)   $$
% if for some $i_0 \le \min \{r,s\}$
 %$$ \lambda_i = \nu_i, \ i=1, \dots, i_0 -1, \quad \lambda_{i_0} > \nu_{i_0}. $$
% or
%$$ \lambda_i = \nu_i, \ i=1, \dots, s \quad \mbox{and} \ r> s.$$

\section{ The vertex superalgebra ${\cV}$ and its modules}
\label{ver-def}
In this section we shall recall definition of the   vertex
superalgebra ${\cV}$ and certain results from \cite{A-2007}.
Let $M(0) = {\C}[{\gamma}^{+}(n), {\gamma}^{-}(n) \  \vert \ n <0
]$ be the commutative vertex algebra generated by the fields
$${\gamma} ^{\pm} (z) = \sum_{ n  < 0} {\gamma}^{\pm} (n) z ^{-n-1}. $$
(cf. \cite{efren}). Let $\chi^{\pm}(z) = \sum_{n \in {\Z} }
\chi^{\pm}_{n} z^{-n-1} \in {\C} ((z))$. Let $M(0, \chi^{+},
\chi^{-})$ denotes the $1$--dimensional irreducible $M(0)$--module
with the property that every element ${\gamma}^{\pm}(n)$ acts on
$M(0, \chi^{+}, \chi^{-})$ as multiplication by $\chi^{\pm}_n \in
{\C}$.

Let now ${\mathcal F}$ be the vertex superalgebra generated by the
fields $\Psi^{\pm} (z)$ and ${\gamma}^{\pm}(z)$. Therefore
${\mathcal F} = F \otimes M(0)$. As in \cite{A-2007}, denote by
${\cV}$ the vertex subalgebra of the  vertex superalgebra
${\mathcal F}$ generated by the following vectors
\bea
 \tau^{\pm} &=& (\Psi^{\pm} (-\tfrac{3}{2}) + {\gamma}^{\pm} (-1)
 \Psi^{\pm}(-\hf)) {\vak}, \label{def-tau} \\
 j &=& \frac{ {\gamma}^{+} (-1) - {\gamma}^{-}(-1)}{2} {\vak}, \label{def-j} \\
 \nu &=& \frac{ 2 {\gamma}^{+} (-1) {\gamma}^{-}(-1) + {\gamma}^{+}(-2) + {\gamma}^{-}(-2)}{4}
  {\vak} .
\label{def-nu}  \eea
The  vertex superalgebra structure on ${\cV}$     is generated by
the following fields \bea && G^{\pm} (z) = Y(\tau ^{\pm} ,z)
= \sum _{n \in {\Z} } G ^{\pm} (n+ {\hf} ) z ^{-n-2}, \label{polje-g} \\
 && S (z) = Y(\nu,z)
= \sum _{n \in {\Z} } S({n })  z ^{-n-2}, \label{polje-s} \\
&& T (z) = Y(j,z) = \sum _{n \in {\Z} } T(n )  z ^{-n-1}.
\label{polje-t} \eea

Denote by ${\cA}$ the  Lie superalgebra with  basis $ S(n), T(n),
{G} ^{\pm} (r), C$, $n\in {\Z}$, $ r\in {\hf} + {\Z}$ and
(anti)commutation relations  given by \bea && [S(m),S(n)] = [S(m),
T(n) ] = [S(m), G^{\pm} (r) ] =0,
\non \\
&& [T(m), T(n)]= [T(m), G^{\pm} (r)] =0, \nonumber \\ && [C,
S(m)]= [C, T(n)] = [C,
G^{\pm}(r)]=0, \nonumber \\
   && \{ G ^{+} (r),
G ^{-} (s) \} = 2 S({r+s}) +
(r-s) T({r+s}) + \tfrac{C}{3} ( r  ^ 2 - \tfrac{1}{4} ) \delta_{r+s,0}, \non \\
 && \{ G ^{+} (r), G ^{+} (s) \}= \{ G ^{-} (r), G ^{-} (s) \} = 0 \non
\eea for all $n \in {\Z}$, $r,s \in {\hf} + {\Z}$.

By using the commutator formulae for vertex superalgebras, we have
that the components of  fields (\ref{polje-g})-(\ref{polje-t})
satisfy the (anti)commutation relation for the Lie superalgebra
${\cA}$ such that the central element $C$ acts as multiplication
by $c=-3$.
%
%dodano
%
%
Let ${\cV} ^{com}$ be the commutative vertex subalgebra of ${\cV}$ generated by the fields $T(z)$ and $S(z)$. Clearly, ${\cV} ^{com} \cong M_T(0) \otimes M_S(0)$, where
$M_T(0)$ (resp. $M_S(0)$ ) is the  subalgebra of ${\cV} ^{com}$ generated by the field $T(z)$ (resp. $S(z)$).

Recall that ${\cV}$ admits the following $\Z$--graduation:
\bea
 {\cV}  & = & \bigoplus_{m \in {\Z} } {\cV} ^{m} \nonumber \\
{\cV} ^{m}  & = & \mbox{span}_{\C} \{ G ^{+} (-n_1 - \tfrac{3}{2}) \cdots G ^{+} (-n_r - \tfrac{3}{2}) G ^{-} (-k_1 - \tfrac{3}{2}) \cdots G ^{-} (-k_s - \tfrac{3}{2}) w \vert \nonumber \\
 & & w \in {\cV} ^{com}, n_i, k_j \in {\Zp}, r-s = m \}. \nonumber
\eea
 Now   we shall consider a family of
 irreducible ${\cV}$--modules.

For ${\chi}^{+}, {\chi}^{-} \in {\C}((z))$ we set
$F({\chi}^{+}, {\chi}^{-}) :=F \otimes M(0,\chi^{+},\chi^{-})$.

Then $F({\chi}^{+}, {\chi}^{-})$ is a module for the vertex
superalgebra ${\cV}$, and therefore for the Lie superalgebra
${\cA}$.

Since $ M(0,\chi^{+},\chi^{-})$ is one-dimensional, we have that
as  a vector space

\bea  \label{identification-prva} F({\chi}^{+}, {\chi}^{-}) \cong
F \cong { \bigwedge} (\Psi^{\pm} (-i-\hf) \ \vert \ i \ge 0 ).
\eea

Now let ${\chi} (z)  \in {\C}((z))$. Define:
$$ \widetilde{F}_{\chi} := \widetilde{F} \otimes M(0,0,{\chi}) \subset F ( 0,{\chi}).$$
The operator $J^{f}(0)$ acts semisimply on $\widetilde{F}_{\chi}$
and it defines the   following $\Z$--gradation
$$ \widetilde{F}_{\chi} = \bigoplus_{j \in \Z} \widetilde{F}_{\chi} ^j, \quad
\widetilde{F}_{\chi} ^j = \{ v \in \widetilde{F}_{\chi} \ \vert \
J^{f}(0) v = j v \}. $$

 The ${\cA}$--module
structure on $\widetilde{F}_{\chi}$ is uniquely determined by the
following action of the Lie superalgebra ${\cA}$ on
$\widetilde{F}$:
\bea
G^{+}(i-\hf) &=& -i \Psi^{+}(i-\hf) \label{++djelovanje1} , \\
G^{-}(i-\hf) &=&- i \Psi^{-}(i-\hf) + \sum_{k=-p} ^{\infty}
{\chi}_{-k} \Psi^{-} (k+i -{\hf}). \label{--djelovanje1}
 \eea

Now we shall first  recall the following  irreducibility result:

\begin{proposition} [\cite{A-2007}, Proposition 5.2] \label{ired1}
Assume that $p \in {\Zp}$ and that
$$ {\chi}(z) = \sum_{n=-p} ^{\infty} {\chi}_{-n} z ^{n-1} \in {\C}((z))$$
satisfies the following conditions
\bea
&& {\chi}_p \ne 0,   \label{uvjet-prvi} \\
&& {\chi}_0 \in  \{1\} \cup \left({\C} \setminus {\Z}\right)  \ \
\mbox{if} \ p=0 \label{uvjet-drugi}. \eea
Then
$\widetilde{F}_{\chi}$ is an irreducible ${\cV}$--module.
\end{proposition}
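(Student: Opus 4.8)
\noindent\emph{Strategy.}
The first step is to make the $\cA$--action on $\widetilde F_\chi$ transparent; note that since $\cV$ is generated by $\tau^{\pm},j,\nu$, whose fields have modes spanning $\cA$, a $\cV$--submodule of $\widetilde F_\chi$ is the same thing as an $\cA$--submodule. Evaluating $Y(\tau^{\pm},z)$, $Y(j,z)$, $Y(\nu,z)$ on $F(0,\chi)$ and using that $\gamma^{+}$ acts by $0$ while $\gamma^{-}(z)$ acts by the scalar series $\chi(z)$, one finds $G^{+}(z)=\partial\Psi^{+}(z)$ and $G^{-}(z)=\bigl(\partial+\chi(z)\bigr)\Psi^{-}(z)$ on $\widetilde F_\chi$, whereas $S(z)$ and $T(z)$ act by scalar--valued formal series and are therefore central for the action. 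Hence an $\cA$--submodule of $\widetilde F_\chi$ is just a submodule over the associative subalgebra $A\subseteq\mathrm{End}\,\widetilde F_\chi$ generated by all modes $G^{\pm}(r)$, $r\in\hf+\Z$, and it suffices to prove that $\widetilde F_\chi$ is irreducible over $A$.

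\noindent\emph{Passing to the associated graded.}
I would filter $\widetilde F_\chi$ by the $L^{(f)}(0)$--grading of $\widetilde F$, setting $F_{\le d}=\bigoplus_{d'\le d}\widetilde F_{d'}$; this filtration is exhaustive, bounded below and has finite--dimensional graded pieces. By \eqref{++djelovanje1}--\eqref{--djelovanje1} each generator of $A$ is filtered--homogeneous, with symbols
\[
\mathrm{symb}\,G^{+}(i-\hf)=-i\,\Psi^{+}(i-\hf),\qquad
\mathrm{symb}\,G^{-}(i-\hf)=
\begin{cases}
\chi_p\,\Psi^{-}(i-p-\hf), & p\ge 1,\\[1mm]
(\chi_0-i)\,\Psi^{-}(i-\hf), & p=0,
\end{cases}
\]
since for $p\ge1$ the $k=-p$ summand in \eqref{--djelovanje1} raises the $L^{(f)}(0)$--degree strictly more than every other term, while for $p=0$ the $k=0$ summand and the undressed term $-i\,\Psi^{-}(i-\hf)$ are the leading ones and combine. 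Thus $\mathrm{gr}\,A$ contains $\Psi^{+}(r)$ for every $r\in\hf+\Z$ with $r\ne-\hf$, and, using $\chi_p\ne0$ when $p\ge1$ and condition \eqref{uvjet-drugi} when $p=0$, it contains $\Psi^{-}(s)$ for every $s\in\hf+\Z$ with at most one exception; that exception can occur only if $p=0$ and $\chi_0\in\Z$, in which case the missing mode is $\Psi^{-}(\chi_0-\hf)$, which equals $\Psi^{-}(\hf)$ precisely when $\chi_0=1$.

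\noindent\emph{Conclusion.}
As a graded vector space $\mathrm{gr}\,\widetilde F_\chi\cong\widetilde F=\mathrm{Ker}_F\Psi^{-}(\hf)$, on which $\Psi^{-}(\hf)$ acts as $0$ and which is freely generated from $\vak$ by the operators $\Psi^{-}(-m-\hf)$ ($m\ge0$) and $\Psi^{+}(-n-\hf)$ ($n\ge1$) --- all of which lie in $\mathrm{gr}\,A$. The same holds for the annihilation operators $\Psi^{\pm}(r)$, $r>0$, needed to reduce an arbitrary basis vector $v_{\lambda,\mu}$ from \eqref{baza-tilde} to a nonzero scalar multiple of $\vak$: the only $\Psi^{-}$--mode that might be absent from $\mathrm{gr}\,A$ is $\Psi^{-}(\hf)$, which acts as $0$ on $\widetilde F$ and is never needed. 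The standard Clifford--algebra (free fermion) argument then shows $\mathrm{gr}\,\widetilde F_\chi$ is an irreducible $\mathrm{gr}\,A$--module. Since the filtration is exhaustive and bounded below, irreducibility lifts: a nonzero $A$--submodule $N\subseteq\widetilde F_\chi$ inherits a good filtration, $\mathrm{gr}\,N$ is a nonzero $\mathrm{gr}\,A$--submodule of $\mathrm{gr}\,\widetilde F_\chi$, hence equals it, so $N=\widetilde F_\chi$. Therefore $\widetilde F_\chi$ is irreducible as an $A$--module, hence as an $\cA$-- and a $\cV$--module.

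\noindent\emph{The main obstacle.}
The delicate point is the middle step: one must verify that no cancellation spoils the symbols, so that the pivot governing $\mathrm{gr}\,A$ really is $\chi_p$ in the case $p\ge1$ and $\chi_0-i$ in the case $p=0$; and, above all, one must notice that a single absent $\Psi^{-}$--mode is harmless exactly when it is $\Psi^{-}(\hf)$. It is this interplay between the twist $\chi$ and the defining kernel $\widetilde F=\mathrm{Ker}_F\Psi^{-}(\hf)$ that explains why $\chi_0=1$ is the one integral value permitted by \eqref{uvjet-drugi}. Arranging the filtration so that the reduction to $\vak$ still terminates, despite the extra creation terms the dressing $\chi(z)\Psi^{-}(z)$ produces when $p\ge1$, is the remaining point demanding care.
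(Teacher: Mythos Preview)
The paper does not prove this proposition; it is quoted from \cite{A-2007} and no argument is reproduced here. So there is no ``paper's own proof'' to compare against. That said, your filtration argument is sound and the strategy is correct: passing to the $L^{(f)}(0)$--associated graded converts $G^{-}(i-\hf)$ into a scalar multiple of a single Clifford mode, and the conditions \eqref{uvjet-prvi}--\eqref{uvjet-drugi} are exactly what make those scalars nonvanishing for every mode you actually need on $\widetilde F=\mathrm{Ker}_F\Psi^{-}(\hf)$. The one place to be a bit more careful than you were is the $p\ge 1$ case with $i=p+1$: there the ``symbol'' $\chi_p\Psi^{-}(\hf)$ is the zero operator on $\widetilde F$, so the true filtration degree of $G^{-}(p+\hf)$ drops; but since $\Psi^{-}(\hf)$ is never required either to create basis vectors $v_{\lambda,\mu}$ (all $\mu_j\ge\thf$) or to annihilate them, this is harmless. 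Your closing worries about ``cancellation'' for $p\ge1$ and about ``termination despite extra creation terms'' are misplaced: for $p\ge1$ the top term $\chi_p\Psi^{-}(i-p-\hf)$ is alone in its degree, and the whole purpose of working in $\mathrm{gr}$ is that the lower--order dressings disappear.

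It is worth contrasting your approach with the method the paper \emph{does} use for the companion result, Proposition~\ref{ired-a-1}. There the argument is direct and combinatorial: Lemma~\ref{omega} shows any nonzero submodule contains some $\Omega_s$, one maneuvers to $\Omega_{\ell}$ and then to $\vak$ via an explicit Schur--polynomial determinant, and finally $\vak$ is shown to be cyclic by an explicit computation. Your associated--graded method is more structural and, in the setting of Proposition~\ref{ired1}, cleaner: it explains in one stroke why the leading coefficient $\chi_p$ (or the avoidance of integer $\chi_0\ne 1$) controls irreducibility, without any case--by--case reduction. On the other hand, the paper's hands--on method is what is needed in the genuinely new case $\chi_0=\ell+1\in\Z_{\ge 2}$, where the symbol of $G^{-}(\ell+\hf)$ vanishes at a mode that \emph{is} needed and the Schur condition $S_\ell(-\chi)\ne 0$ enters; your filtration argument would not extend to that case without substantial modification.
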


\section{Schur polynomials and irreducibility of
$\widetilde{F}_{\chi}$}

In this section  we shall extend the irreducibility result from
Proposition \ref{ired1}. We shall always assume that $ \chi(z)$
has the form
\bea &&{\chi}(z) =  \frac{\ell +1}{z} + \sum_{n=1} ^{\infty}
{\chi}_{-n} z ^{n-1} \in {\C}((z)), \label{oblik}\eea
where $\ell \in {\Z}$.
Then the ${\cA}$--module structure on $\widetilde{F}_{\chi}$ is
uniquely determined by the following action of the Lie
superalgebra ${\cA}$ on $\widetilde{F}$:
\bea
G^{+}(i-\hf) &=& -i \Psi^{+}(i-\hf) \label{++djelovanje} , \\
G^{-}(i-\hf) &=&({\ell}+1- i) \Psi^{-}(i-\hf) + \sum_{n=1}
^{\infty} {\chi}_{-n} \Psi^{-} (n+i -{\hf}). \label{--djelovanje}
 \eea

By Proposition \ref{ired1} we know that if $\ell$ is generic or
$\ell =0$, then $\widetilde{F}_{\chi}$ is an irreducible module.
We shall consider the case when $\ell \in \N$,  and find a
sufficient condition on $\chi(z)$ so that $\widetilde{F}_{\chi}$
is irreducible.

For every $ s \in {\N}$, we define $$\Omega_{s} =\Psi^{+} (-s
-\hf) \Psi ^{+} (-s + \hf) \cdots \Psi^{+} (-\tfrac{3}{2}) {\vak}
\in \widetilde{F}_{\chi}.
$$

We shall need the following lemma. The proof will use only the  action of the operators $G^{+}(i -\hf)$, $ i \in \Z$.

\begin{lemma} \label{omega}
Assume that $U \subset \widetilde{F}_{\chi}$ is any  submodule, $U
\ne \{0 \}$. Then there is $s \in {\N}$ such that
$$\Omega_s  \in U.$$
\end{lemma}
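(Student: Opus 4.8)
The plan is to exploit the $\mathbb{Z}$-grading $\widetilde{F}_{\chi} = \bigoplus_{j} \widetilde{F}_{\chi}^j$ by $J^{f}(0)$-eigenvalue together with the action of the even operators $G^{+}(i-\hf) = -i\,\Psi^{+}(i-\hf)$, which (up to nonzero scalars for $i\neq 0$) are just the modes $\Psi^{+}(i-\hf)$. First I would observe that since $U$ is a submodule, it is $J^{f}(0)$-stable, hence $U = \bigoplus_j (U \cap \widetilde{F}_{\chi}^j)$; pick a nonzero homogeneous vector $w \in U$. The idea is to show that by repeatedly applying the annihilation operators $\Psi^{+}(r)$, $r>0$ (equivalently $G^{+}(i-\hf)$ for $i\geq 1$), and the creation operators $\Psi^{+}(-r)$, $r>0$ (equivalently $G^{+}(i-\hf)$ for $i\leq -1$), one can reach some $\Omega_s$. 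Note that $\Omega_s = \Psi^{+}(-s-\hf)\Psi^{+}(-s+\hf)\cdots\Psi^{+}(-\tfrac32){\vak}$ is, up to a nonzero scalar, $G^{+}(-s-1)G^{+}(-s)\cdots G^{+}(-1){\vak}$ applied to the vacuum, so membership of $\Omega_s$ in $U$ would follow once we know ${\vak} \in U$ — but we cannot assume that, so instead we must build $\Omega_s$ directly from $w$.

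The concrete strategy is as follows. Write $w$ in terms of the basis (\ref{baza-tilde}): $w = \sum c_{\lambda,\mu}\, v_{\lambda,\mu}$, with all terms in a fixed $L^{f}(0)$-degree and $J^{f}(0)$-degree. Among the $v_{\lambda,\mu}$ occurring with nonzero coefficient, choose one, say $v_{\lambda_0,\mu_0}$, that is extremal in a suitable sense — for instance, with the largest number of $\Psi^{-}$ factors, i.e. maximal $\ell(\lambda_0)$, and among those, lexicographically largest $\lambda_0$. The plan is to kill all the $\Psi^{-}$ factors: apply the operators $\Psi^{+}(\lambda_{0,k})$ (which are scalar multiples of $G^{+}(\lambda_{0,k})$) successively; by the Clifford anticommutation relations $\{\Psi^{+}(r),\Psi^{-}(s)\} = \delta_{r+s,0}$, each such operator contracts one matching $\Psi^{-}(-\lambda_{0,k})$ factor while annihilating the vacuum, and the extremality of $v_{\lambda_0,\mu_0}$ ensures that no cancellation destroys the resulting term. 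After this, the surviving vector in $U$ is a nonzero linear combination of vectors of the form $v_{\phi,\mu}$, i.e. products of $\Psi^{+}$-modes on the vacuum. A similar extremal-term argument, now applying annihilation operators $\Psi^{+}(r)$ with $r>0$, will either strip or reorganize the $\Psi^{+}$ factors; in fact applying $\Psi^{+}(\mu_k - 1)$-type operators (again coming from $G^{+}$) and using $\{\Psi^{+}(r),\Psi^{+}(s)\}=0$ one sees that the only freedom is to raise and lower the half-integer indices. One should then argue that from any nonzero vector of the form (linear combination of) $\Psi^{+}(-\mu_1)\cdots\Psi^{+}(-\mu_s){\vak}$ one can reach the particular "staircase" vector $\Omega_s = \Psi^{+}(-s-\hf)\cdots\Psi^{+}(-\tfrac32){\vak}$, e.g. by applying creation operators $\Psi^{+}(-r)$ for appropriate large $r$ (which always produce something nonzero since the $\Psi^{+}$'s generate an exterior algebra and distinct indices never collide) and then using annihilation operators to trim back to the staircase shape.

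The main obstacle I expect is controlling cancellations: applying the $G^{+}(i-\hf)$ operators to the full vector $w$ — not just to a single basis term — could in principle cause the "target" term to cancel against contributions coming from other basis vectors in $w$. Overcoming this is exactly why one must choose the extremal basis vector carefully and verify that the chosen string of operators, when applied to any other basis vector appearing in $w$, either annihilates it or produces a term that is still "smaller" in the chosen ordering, so that the extremal contribution survives with a nonzero coefficient. A secondary, more bookkeeping-type issue is that the coefficient $(\ell+1-i)$ in $G^{-}(i-\hf)$ is irrelevant here — the lemma's statement explicitly restricts to $G^{+}$ operators, which is fortunate, since it means the argument is purely Clifford-algebraic and does not interact with $\chi$ or $\ell$ at all; the only scalars one must keep nonzero are the factors $-i$ with $i\neq 0$, and one simply never applies $G^{+}(-\hf)$ (the $i=0$ case).
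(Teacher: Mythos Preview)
Your overall strategy matches the paper's: use only the operators $G^{+}(i-\hf)=-i\,\Psi^{+}(i-\hf)$, first strip all $\Psi^{-}$ factors via an extremal choice of $\lambda$, then turn the surviving $\Psi^{+}$-only vector into some $\Omega_s$. Your first stage is correct and is essentially the paper's argument (the ``lex-largest'' refinement is unnecessary --- any $\lambda_0$ of maximal length already works, since $G^{+}_{\lambda_0}$ kills $v_{\lambda,\mu}$ unless $\lambda_0\subseteq\lambda$, which for $\ell(\lambda)\le\ell(\lambda_0)$ forces $\lambda=\lambda_0$). One minor point: your claim that $U$ is $J^{f}(0)$-stable is unjustified, since $J^{f}(0)$ is a Clifford operator and not an element of $\cA$ or of $\cV$; compatibility of the grading with the action does not by itself force submodules to be graded. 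This is harmless, however: just take an arbitrary nonzero $w\in U$ with no homogeneity hypothesis, as the paper does, and the extremal argument goes through unchanged.

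The genuine gap is in your second stage. Once you have reduced to a nonzero $w'=\sum_{\mu} d_{\mu}\,v_{\phi,\mu}$, the positive modes $\Psi^{+}(r)$ with $r>0$ annihilate every $v_{\phi,\mu}$ outright (they anticommute past each $\Psi^{+}(-\mu_j)$ and kill ${\vak}$), so there is no way to ``strip'', ``reorganize'', or ``trim back'' using $G^{+}$ alone; your proposed spread-then-trim manoeuvre cannot be executed. The paper's remedy is not to trim but to \emph{fill in}: among the $\mu$ with $d_\mu\ne 0$ pick $\bar\mu$ of \emph{minimal} length, set $s+\tfrac12=\max\{\mu_1:d_\mu\ne 0\}$ and $t=\{\tfrac32,\dots,s+\tfrac12\}\setminus\bar\mu$, and apply the product of \emph{creation} operators $G^{+}_{-t}=\prod_j G^{+}(-t_j)$. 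This kills every term with $\mu\ne\bar\mu$ (such a $\mu$ satisfies $\ell(\mu)\ge\ell(\bar\mu)$ and $\mu\subseteq\{\tfrac32,\dots,s+\tfrac12\}$, hence $\mu\not\subseteq\bar\mu$, hence $\mu\cap t\ne\emptyset$) and sends $v_{\phi,\bar\mu}$ to a nonzero multiple of $\Omega_s$. That single filling step is what replaces your second stage.
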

\begin{proof}
For $\lambda \in \mathcal{P}$ and $t \in \overline{\mathcal{P}}$, we set $$G^{+}_{\lambda} = \left\{ \begin{array}{cc}
                                         1&  \mbox{if} \  {\lambda} = \phi  \\
                                          G ^+ ( {\lambda}_1) \cdots G^{+} (   {\lambda}_r )  & \quad \mbox{if} \  {\lambda} = ( {\lambda}_1, \dots,  {\lambda}_r)
                                        \end{array}  \right.$$

             $$G^{+}_{-t} = \left\{ \begin{array}{cc}
                                         1&  \mbox{if} \  t = \phi  \\
                                          G ^+ ( -t_1) \cdots G^{+} ( -t_r )  & \quad \mbox{if} \  t = ( t_1, \dots,  t_r)
                                        \end{array}  \right.$$

Let $v \in U$, $v \ne 0$. Then $v$ has unique decomposition
$$ v = \sum_{ (\lambda, \mu) \in \mathcal{P} \times \overline{\mathcal{P}} } C_{\lambda, \mu} v_{\lambda,\mu} \quad (C_{\lambda,\mu} \in {\C} ) $$
in the basis (\ref{baza-tilde}). Let $\ell = \max \{ \ell (\lambda) \ \vert  \ C_{\lambda, \mu} \ne 0 \}$. We can choose   $(\bar {\lambda}, \bar{\mu}) \in \mathcal{P} \times \overline{\mathcal{P}} $ such that
\begin{itemize}
 \item[(1)] $C_{\bar {\lambda}, \bar{\mu}} \ne 0 $, $  \ell (\bar{\lambda}) = \ell $,
\item[(2)]$\ell (\overline{\mu})=\ell_1 = \min \{\ell(\mu)  \vert \mu \in T_1 \}$ where  $ T_1= \{ \mu \in \overline{\mathcal{P} }  \ \vert  \ C_{\overline{\lambda}, \mu} \ne 0 \} $.
\end{itemize}

If $T_1 = \{ \phi \}$, we set $f = G^{+}_ {\overline{\lambda} }$. Otherwise, let
$s \in \N$ be such that
$$ \max \{ \mu_1 \ \vert \ \mu = (\mu_1, \dots, \mu_l) \in T_1 \} = s+ \tfrac{1}{2}. $$

If  $\overline{\mu} = (\overline{\mu}_1, \dots, \overline{\mu}_{\ell_1}) $ and $ 0 < \ell_1 = \ell(\overline{\mu}) < s$, there are unique  $ t _1 > \cdots > t_p $, $p = s- \ell_1$, such that
$$ \{ t_1, \dots, t_p\} =   \{ \tfrac{3}{2}, \dots, s+ \tfrac{1}{2}\} \setminus  \{ \overline{\mu}_1, \dots, \overline{\mu}_{\ell_1} \}.
 $$
Define now $t \in \overline{\mathcal{P}}$ in the following way:
 $$ t = \left\{ \begin{array}{ccc}
                                          \phi &  \mbox{if} \  & \ell_1 = s   \\
                                           (s+1/2, \dots, 3/2)    & \quad \mbox{if} \  &  \ell_1= 0  \\
                                          (t_1, \dots, t_p)  & \quad \mbox{if }  &0 < \ell_1 < s
                                        \end{array}  \right.   .$$

 Then we set
 $$ f = G^{+}_{-t} G^{+}_{ \overline{\lambda} }. $$

By construction we have that $G^{+}_{ \overline{\lambda} }$ annihilates basis vectors $v_{\lambda, \mu}$ such that $\ell(\lambda) \le \ell$, $\lambda \ne \overline{\lambda}$, and $G^{+}_{-t}$ annihilates all $v_{\overline{\lambda}, \mu}$\ , where $\mu \in T_1 \setminus \{ \overline{\mu} \}$. Therefore,
\bea  && f v_{\lambda, \mu}  = 0 \quad \mbox{if} \quad C_{\lambda, \mu} \ne 0 \ \mbox{and} \ (\lambda, \mu ) \ne (\overline{\lambda}, \overline{\mu}), \nonumber \\
&&  f v_{\overline{\lambda}, \overline{\mu} } =\nu \Omega_{s} \quad (\nu\ne 0) \quad \mbox{if} \ T_1 \ne \{\phi\}, \nonumber \\
&&   f v_{\overline{\lambda}, \overline{\mu}  } = \nu_1 {\vak} \quad (\nu_1 \ne 0) \quad \mbox{if} \ T_1 = \{\phi\}. \nonumber \eea
The proof follows.
\end{proof}

In order to present  new irreducibility criterion, we shall first
recall the definition of Schur polynomials.

 Define the Schur polynomials $S_{r}(x_{1},x_{2},\cdots)$
 in variables $x_{1},x_{2},\cdots$ by the following equation:
\begin{eqnarray}\label{eschurd}
\exp \left(\sum_{n= 1}^{\infty}\frac{x_{n}}{n}y^{n}\right)
=\sum_{r=0}^{\infty}S_{r}(x_1,x_2,\cdots)y^{r}.
\end{eqnarray}

We shall also use the following formula for Schur polynomials:
\bea  \label{det-schur} S_{r}(x_{1},x_{2},\cdots) &=\frac{1}{r !}&
\det \left(\begin{array}{ccccc}
  x_{1} & x_{2} & \cdots  & \  & x_{r} \\
  -r +1 &x_{1}  & x_{2} & \cdots &  x_{r- 1} \\
  0 & -r+2 & x_{1} & \cdots & x_{r- 2} \\
  0 & \ddots & \ddots & \ddots &  \\
  0 & \cdots & 0 & -1 & x_{1} \\
\end{array}
\right) \eea

\begin{lemma} \label{schur-1}We have
$$G^- (\hf) \cdots G^- (\ell -\hf) \Omega_{\ell} =(-1) ^{\ell} {\ell} ! \ S_{\ell} (-\chi)
{\vak}
$$
where  $S_{\ell} (-\chi) = S_{\ell}(-\chi_{-1}, \dots,
-\chi_{-\ell}, \dots )$.
\end{lemma}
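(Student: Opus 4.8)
The plan is to compute $G^-(\hf)\cdots G^-(\ell-\hf)\Omega_\ell$ by repeated application of the action formula \eqref{--djelovanje}, keeping careful track of which terms survive. Recall
$$\Omega_\ell = \Psi^+(-\ell-\hf)\Psi^+(-\ell+\hf)\cdots\Psi^+(-\tfrac32){\vak},$$
and that $G^-(i-\hf)=(\ell+1-i)\Psi^-(i-\hf)+\sum_{n\ge 1}\chi_{-n}\Psi^-(n+i-\hf)$. Since the $\Psi^-$'s anticommute with the $\Psi^+$'s only through the pairing $\{\Psi^+(r),\Psi^-(s)\}=\delta_{r+s,0}$, each $\Psi^-(m)$ occurring in some $G^-(i-\hf)$ acts on a product of $\Psi^+$'s by "contracting" with the unique matching $\Psi^+(-m)$ (if present, i.e.\ if $m\in\{\tfrac32,\dots,\ell+\hf\}$), with the usual Koszul sign, and annihilates ${\vak}$ otherwise (since $m>0$ forces $\Psi^-(m){\vak}=0$, and the terms with $m\le 0$ are absent from \eqref{--djelovanje}). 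Note in particular that the "diagonal" term $(\ell+1-i)\Psi^-(i-\hf)$ in $G^-(i-\hf)$ contracts with $\Psi^+(-(i-\hf))$, which lies in $\Omega_\ell$ exactly when $1\le i\le\ell$; but its coefficient $\ell+1-i$ is what will reorganize the sum into the Schur determinant.

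The key computational step is to expand the product $G^-(\hf)\cdots G^-(\ell-\hf)$ acting on $\Omega_\ell$ as a sum over ways of pairing the $\ell$ operators $G^-(i-\hf)$, $i=1,\dots,\ell$, with the $\ell$ generators $\Psi^+(-j-\hf)$, $j=0,\dots,\ell-1$, of $\Omega_\ell$ (a perfect matching must occur, else the result is not a multiple of ${\vak}$). Fix such a matching: say $G^-(i-\hf)$ pairs with $\Psi^+(-(i-\hf)-k_i)$ for some $k_i\ge 0$; this contributes the coefficient $\chi_{-k_i}$ when $k_i\ge 1$ and $\ell+1-i$ when $k_i=0$ (reading off from \eqref{--djelovanje}, and observing that setting $\chi_0:=\ell+1$ — consistent with the hypothesis \eqref{oblik}, where the coefficient of $1/z$ is $\ell+1$ — unifies both cases). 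Collecting signs and summing over all matchings, one recognizes the resulting expression as $\ell!$ times a determinant. Matching it against the determinant formula \eqref{det-schur} for $S_\ell$, with $x_n\mapsto -\chi_{-n}$, yields the factor $(-1)^\ell\ell!\,S_\ell(-\chi)$; the sign $(-1)^\ell$ comes from the overall sign relating $\Psi^-$-contractions to the transpose structure of the matrix in \eqref{det-schur}, together with the substitution $x_n=-\chi_{-n}$ affecting the off-diagonal entries. It is cleanest to argue by induction on $\ell$: peeling off $G^-(\hf)$ (the operator with the largest diagonal coefficient $\ell$) and using the recursion $r\,S_r=\sum_{n=1}^r x_n S_{r-n}$ implicit in \eqref{eschurd}.

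The main obstacle will be the bookkeeping of Koszul signs: each contraction of a $\Psi^-$ past intervening $\Psi^+$'s and past the other (already partially simplified) $\Psi^-$'s introduces a sign, and one must check these assemble into exactly the signed permutation expansion of the determinant \eqref{det-schur} and not some variant with spurious signs. I would handle this by first establishing a clean normal-ordering lemma — namely, that for $1\le i\le\ell$ and $m=i-\hf+k$ with $0\le k$, the operator $\Psi^-(m)$ applied to $\Omega_\ell$ deletes the factor $\Psi^+(-m)$ and produces a sign equal to $(-1)$ raised to the number of $\Psi^+$ factors to its left in $\Omega_\ell$ — and then tracking how these signs compose when the $\ell$ operators act successively. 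An alternative, possibly shorter, route avoiding the sign audit is to verify the identity first for the truncated polynomial $\chi$ with only finitely many nonzero $\chi_{-n}$ (both sides are polynomial in the $\chi_{-n}$ of the correct degree $\ell$), and to pin down the universal constant by specializing to a convenient $\chi$ — e.g.\ $\chi_{-1}=x$, all other $\chi_{-n}=0$, where $S_\ell(-\chi)=\tfrac{(-x)^\ell}{\ell!}$ and the left side is computed directly — and then check the full coefficient structure matches \eqref{det-schur} by a single further specialization or a degree/multilinearity argument. Either way the content is the same; the determinant formula \eqref{det-schur} is the target identity to land on.
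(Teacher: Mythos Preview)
Your plan is essentially the paper's own proof: expand each $G^-(i-\hf)$ via \eqref{--djelovanje}, interpret the $\Psi^-$/$\Psi^+$ contractions against $\Omega_\ell$ as the signed permutation expansion of an $\ell\times\ell$ determinant, and identify that determinant with \eqref{det-schur} under $x_n\mapsto -\chi_{-n}$. The paper simply writes the expansion and the determinant down without the sign discussion you sketch; your induction/specialization alternatives are unnecessary but harmless.

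Two bookkeeping points to correct before you execute. First, the factors in $\Omega_\ell$ are $\Psi^+(-j-\hf)$ for $j=1,\dots,\ell$, not $j=0,\dots,\ell-1$; in particular $\Psi^+(-\hf)$ is absent, so the ``diagonal'' summand $\ell\,\Psi^-(\hf)$ of $G^-(\hf)$ has no partner and contributes zero --- this is why the first row of the resulting matrix begins with $\chi_{-1}$ rather than $\ell$. Second, your parenthetical ``setting $\chi_0:=\ell+1$ unifies both cases'' is wrong: the coefficient of $\Psi^-(i-\hf)$ in $G^-(i-\hf)$ is $\ell+1-i$, which varies with $i$, and it is exactly this variation that produces the subdiagonal entries $\ell-1,\ell-2,\dots,1$ matching the entries $-(r-1),\dots,-1$ of \eqref{det-schur} after pulling out $(-1)^\ell\ell!$. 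Once those two indexing fixes are made, your determinant lands directly on the paper's.
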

\begin{proof} By using action (\ref{--djelovanje}) we get:
\bea
& & G^- (\hf) \cdots G^- (\ell -\hf) \Omega_{\ell} \nonumber \\
= && \left( \ell \Psi ^{-} (  \tfrac{1}{2})+\chi_{-1} \Psi^{-}
(\tfrac{3}{2}) + \cdots +\chi_{-\ell} \Psi^{-} (\ell + \hf)
\right)  \cdots \nonumber \\
&&  \left( 2 \Psi ^{-} ( \ell - \tfrac{3}{2})+ \chi_{-1} \Psi^{-}
(\ell - \hf) + \chi_{-2} \Psi^{-} (\ell + \hf) \right)  \left(
\Psi^{-} (\ell - \hf) + \chi_{-1} \Psi^{-} (\ell + \hf) \right)
\Omega_{\ell}
\nonumber \\
=&&\det  \left(
\begin{array}{ccccc}
  \chi_{-1} & \chi_{-2} & \cdots  & \  & \chi_{-\ell} \\
  \ell -1 &\chi_{-1}  & \chi_{-2} & \cdots &  \chi_{-\ell +1} \\
  0 & \ell-2 & \chi_{-1} & \cdots & \chi_{-\ell +2} \\
  0 & \ddots & \ddots & \ddots &  \\
  0 & \cdots & 0 & 1 & \chi_{-1} \\
\end{array}%
\right) \nonumber \\
= && (-1) ^{\ell} {\ell} ! \ S_{\ell}(-\chi_{-1}, \dots,
-\chi_{-\ell}, \dots ) {\vak}. \nonumber
\eea (Here we use elementary properties of determinants and
formula (\ref{det-schur}) for  Schur polynomials). \end{proof}

\begin{proposition} \label{ired-a-1}
Assume that $\ell \in \N$,
$$ {\chi}(z) =  \frac{\ell + 1}{z} + \sum_{n=1} ^{\infty} {\chi}_{-n} z ^{n-1} \in
{\C}((z))$$
such that
\bea
&& S_{\ell} (-\chi)  \ne 0. \nonumber
\eea Then
$\widetilde{F}_{\chi}$ is an irreducible ${\cV}$--module.
\end{proposition}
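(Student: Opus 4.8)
The strategy is to show that any nonzero submodule $U \subset \widetilde{F}_{\chi}$ must be all of $\widetilde{F}_{\chi}$, and since $\widetilde{F}_{\chi}$ is generated (as an $\cA$-module, equivalently a $\cV$-module) by the vacuum $\vak$, it suffices to prove that $\vak \in U$. By Lemma \ref{omega}, we already know that $U$ contains some $\Omega_s$ with $s \in \N$. So the problem reduces to: starting from $\Omega_s$, produce $\vak$ inside $U$ using the $\cA$-action, under the hypothesis $S_{\ell}(-\chi) \neq 0$. The first reduction I would make is to climb down from $\Omega_s$ to $\Omega_{\ell}$: applying suitable operators $G^{+}(i - \hf)$ (which by (\ref{++djelovanje}) act as nonzero multiples of $\Psi^{+}(i-\hf)$) we can strip off the factors $\Psi^{+}(-s-\hf), \dots, \Psi^{+}(-\ell - \thf)$ from $\Omega_s$, landing on a nonzero multiple of $\Omega_{\ell}$; if instead $s < \ell$, we should be able to move the other direction using the creation operators $G^{+}(-n-\thf)$, or more carefully argue that $\Omega_{\ell} \in U$ by first producing $\vak$ from a smaller $\Omega_s$ and then building up — but the cleanest route is to observe that Lemma \ref{omega} can be invoked with enough freedom, and in any case the key computation is Lemma \ref{schur-1}. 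So I would first reduce to showing $\Omega_{\ell} \in U$, then apply Lemma \ref{schur-1}.

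With $\Omega_{\ell} \in U$, Lemma \ref{schur-1} gives
$$
G^{-}(\hf) \cdots G^{-}(\ell - \hf)\, \Omega_{\ell} = (-1)^{\ell} \ell!\, S_{\ell}(-\chi)\, \vak \in U,
$$
and since $S_{\ell}(-\chi) \neq 0$ by hypothesis, we conclude $\vak \in U$, hence $U = \widetilde{F}_{\chi}$ and $\widetilde{F}_{\chi}$ is irreducible as a $\cV$-module. The logical skeleton is therefore short: (1) Lemma \ref{omega} puts some $\Omega_s$ in $U$; (2) a descent/ascent argument with the $G^{+}$ operators puts $\Omega_{\ell}$ in $U$; (3) Lemma \ref{schur-1} together with $S_{\ell}(-\chi) \neq 0$ extracts $\vak$.

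\textbf{The main obstacle.} The delicate point is step (2): controlling the passage between $\Omega_s$ and $\Omega_{\ell}$. Going from $\Omega_s$ down to $\Omega_{\ell}$ when $s \geq \ell$ is straightforward because $G^{+}(i - \hf) = -i\,\Psi^{+}(i-\hf)$ with $-i \neq 0$ for $i \neq 0$, so we can annihilate the unwanted $\Psi^{+}$ factors cleanly (each $\Psi^{+}(r)$ with $r < 0$ occurring can be killed by the corresponding $G^{+}(-r)$, and these operators act as nonzero scalar multiples of the Clifford generators, anticommuting appropriately). The subtler case is $s < \ell$: here one must either show that Lemma \ref{omega} already forces $s$ to be large enough in the relevant situation, or produce $\Omega_{\ell}$ from $\Omega_s$ by acting with the lowering-type operators $G^{+}(-n - \thf)$, which by (\ref{++djelovanje}) are nonzero multiples of $\Psi^{+}(-n-\thf)$, so that $\Psi^{+}(-\ell-\hf)\Psi^{+}(-\ell+\hf)\cdots \Psi^{+}(-s-\thf)\,\Omega_s$ is a nonzero multiple of $\Omega_{\ell}$. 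In all cases the computation is purely inside the exterior-algebra structure of $\widetilde{F}$ and uses only that the coefficients $-i$ appearing in (\ref{++djelovanje}) are nonzero — it is a routine but necessary bookkeeping with the Clifford relations. Once step (2) is handled, the hypothesis $S_{\ell}(-\chi) \neq 0$ does all the work in step (3), which is exactly why Schur polynomials enter the classification.
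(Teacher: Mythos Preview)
Your overall strategy matches the paper's proof exactly: (1) Lemma~\ref{omega} gives some $\Omega_s \in U$; (2) move to $\Omega_{\ell}$; (3) Lemma~\ref{schur-1} extracts $\vak$; (4) cyclicity of $\vak$ finishes. But there are two genuine gaps.

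\textbf{Descent uses $G^{-}$, not $G^{+}$.} Your claim that ``each $\Psi^{+}(r)$ with $r<0$ can be killed by the corresponding $G^{+}(-r)$'' is false: $G^{+}(-r)$ is a multiple of $\Psi^{+}(-r)$, and $\{\Psi^{+}(-r),\Psi^{+}(r)\}=0$, so it anticommutes past and annihilates nothing. To strip a factor $\Psi^{+}(-j-\hf)$ you need $\Psi^{-}(j+\hf)$, hence a $G^{-}$ operator. The paper does exactly this for $s>\ell$:
\[
G^{-}(\ell+\tfrac{3}{2})\cdots G^{-}(s+\hf)\,\Omega_s = C_1\,\Omega_{\ell},\qquad C_1\neq 0,
\]
where the tail terms $\chi_{-n}\Psi^{-}(n+i-\hf)$ in (\ref{--djelovanje}) kill $\Omega_s$ outright (no matching $\Psi^{+}$ factor is present) and the leading coefficients $(\ell+1-i)$ run over $-1,-2,\dots,\ell-s$, all nonzero. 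Your ascent argument for $s<\ell$ via $G^{+}(-n-\thf)$ is correct and agrees with the paper.

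\textbf{Cyclicity of $\vak$ is not automatic.} You assert that $\widetilde{F}_{\chi}$ is generated by $\vak$, but this needs the specific form of the action and the hypothesis $\ell\in\N$. The paper proves it: applying $G^{-}(-N-\hf)\cdots G^{-}(-\hf)$ to $\vak$ yields
\[
(\ell+1)(\ell+2)\cdots(\ell+N+1)\,\Psi^{-}(-N-\hf)\cdots\Psi^{-}(-\hf)\,\vak,
\]
because each correction term $\chi_{-n}\Psi^{-}(n-k-\hf)$ either annihilates or duplicates an existing $\Psi^{-}$ factor. The product of coefficients is nonzero precisely because $\ell\ge 1$ (it would vanish for suitable negative $\ell$). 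From this ``full'' $\Psi^{-}$ string one reaches any basis vector by applying $G^{+}(i-\hf)$, $i\neq 0$, which act as nonzero multiples of $\Psi^{+}(i-\hf)$: positive modes remove unwanted $\Psi^{-}$ factors (via $\{\Psi^{+},\Psi^{-}\}=\delta$) and negative modes insert the desired $\Psi^{+}$ factors. This step is where the hypothesis $\ell\in\N$ (as opposed to $\ell<0$) is used and should not be skipped.
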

\begin{proof}
First we shall prove that the vacuum vector is a cyclic vector for
 the $U(\cA)$--action, i.e.,
 \bea
\label{ciklic} U({\cA}). {\vak} = \widetilde{F}.
 \eea

Take an arbitrary  basis element
\bea &&v= \Psi^{+ }({-n_1-{\hf}})  \cdots \Psi^{+}({-n_r-{\hf}})
\Psi^{-}({-k_1-{\hf}})  \cdots \Psi^{-}({-k_s-{\hf}})
 {\vak}  \in \widetilde{F} ,  \label{baza}\eea
 where $n_i, k_i \in {\Zp}$,  $n_1 >n_2 >\cdots
>n_r > 0 $, $k_1
>k_2
>\cdots
>k_s \ge 0 $.

Let $N \in {\Zp}$ such that $N \ge k_1$. By using
(\ref{--djelovanje}) we get that
$$ G^{-}(- N - {\hf}) \cdots G^{-}(-\thf) G^{-}(-\hf) {\vak} = C \Psi^{-}(-N -{\hf}) \cdots
\Psi ^{-}(-\thf)\Psi^{-} (-\hf) {\vak},
$$
where
$$C=
  (\ell +1) (\ell +2) \cdots (\ell  + N+1)  $$
So $C\ne 0$, and we have that
$$
\Psi^{-}(-N -{\hf}) \cdots \Psi ^{-}(-\thf)\Psi^{-} (-\hf) {\vak}
\in U({\cA}) .{\vak} .$$
By using this fact and the action of elements $G^{+}(i-{\hf})$, $i
\in {\Z}$, we obtain that $v \in U(\cA).{\vak}$. In this way  we
proved (\ref{ciklic}).

It is enough to prove that every vector $u \in
\widetilde{F}_{\chi}$ is cyclic. So let $U = U(\cA). u$. By using
Lemma \ref{omega} we have that there is $s \in {\N}$ such that
$\Omega_s \in U$. Assume that
 $ s > \ell$. Then clearly
\bea
 && G ^{-}( \ell + \tfrac{3}{2}) \cdots G ^{-} ( s + \hf)
\Omega_s = C_1 \Omega_{\ell} \label{rel-djel-1}
\eea
for certain non-zero constant $C_1$. Similarly, if $s < \ell$ we
see that
\bea
 &&  G^{+} (\ell + \hf) \cdots  G^{+} (s + \tfrac{3}{2})
\Omega_s = C_2 \Omega_{\ell}, \quad (C_2 \ne 0).
\label{rel-djel-2}
\eea
Therefore we conclude that $\Omega_{\ell} \in U$.

Applying  Lemma \ref{schur-1} we get
$$G^- (\hf) \cdots G^- (\ell - \hf) \Omega_{\ell} = \nu {\vak}, \quad (\nu \ne 0).$$
Thus  ${\vak} \in U= U(\cA). u$. Now relation (\ref{ciklic}) gives
that $u$ is a cyclic vector in $\widetilde{F}_{\chi}$.
 The proof follows. \end{proof}

\begin{proposition} \label{red-1}
Assume that $\ell \in {\N}$ and  $S_{\ell}(-\chi) = 0$.

\item[(i)] Then $U_{\chi} = U({\cA}) . \Omega_{\ell}$ is a proper
submodule of $\widetilde{F}_{\chi}$. In particular,
$\widetilde{F}_{\chi}$ is reducible.

\item[(ii)] $U_{\chi}$ is an  irreducible $\cV$--module.
\end{proposition}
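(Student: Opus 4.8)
The plan is to handle the two parts in order, reusing the machinery already developed for Proposition \ref{ired-a-1}. For part (i), I must show that $U_{\chi} = U(\cA).\Omega_{\ell}$ is a proper submodule, i.e.\ that ${\vak} \notin U_{\chi}$. First I would observe that $U_{\chi}$ is contained in the span of those basis vectors $v_{\lambda,\mu}$ for which $\ell(\mu) \ge$ a suitable lower bound; concretely, $\Omega_{\ell} = v_{\phi,(\ell+1/2,\dots,3/2)}$ has $\ell(\mu) = \ell$, and the only way to reach a vector with strictly smaller $\mu$-length from $\Omega_{\ell}$ using the operators $G^{+}(i-\hf)$ (which only create $\Psi^{+}$'s, hence increase $\ell(\mu)$) and $G^{-}(i-\hf)$ (which, by \eqref{--djelovanje}, act only on the $\Psi^{-}$ factors and leave the $\Psi^{+}$ factors untouched) is forbidden — $G^{-}$'s cannot remove $\Psi^{+}$'s at all, and $G^{+}$'s acting on $\Omega_{\ell}$ can only add $\Psi^{+}$'s or kill the vector. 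So any monomial in $\cA$ applied to $\Omega_{\ell}$ lands in the span of $v_{\lambda,\mu}$ with $\ell(\mu) \ge \ell \ge 1$, whereas ${\vak} = v_{\phi,\phi}$ has $\ell(\mu) = 0$. Hence ${\vak}\notin U_{\chi}$ and $U_{\chi} \subsetneq \widetilde{F}_{\chi}$.

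For part (ii), I would show $U_{\chi}$ is irreducible by proving that $\Omega_{\ell}$ is a cyclic vector for every nonzero submodule $W \subseteq U_{\chi}$. Given such $W$, apply Lemma \ref{omega} to get some $\Omega_s \in W$ with $s \in \N$. The key point now is that we must have $s \ge \ell$: if $s < \ell$, then by \eqref{rel-djel-2} we could produce $\Omega_{\ell}$ and then, by Lemma \ref{schur-1}, obtain $(-1)^{\ell}\ell!\,S_{\ell}(-\chi){\vak} = 0$ — that is fine, it gives nothing — but more importantly we need to rule out $s<\ell$ producing ${\vak}$; actually the cleaner route is to note directly from the $\ell(\mu)$-filtration argument of part (i) that $U_{\chi}$ contains no $\Omega_s$ with $s < \ell$, since such a vector has $\mu$-length $s<\ell$. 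Therefore $s \ge \ell$, and \eqref{rel-djel-1} gives $\Omega_{\ell} \in W$ (for $s=\ell$ this is immediate). Thus every nonzero submodule of $U_{\chi}$ contains $\Omega_{\ell}$.

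It then remains to show $U(\cA).\Omega_{\ell} = U_{\chi}$ forces $W = U_{\chi}$, which is automatic once $\Omega_{\ell}\in W$. So the whole statement reduces to: (a) the $\ell(\mu)$-stability of $U_{\chi}$, which gives properness in (i) and the bound $s\ge\ell$ in (ii); and (b) the reduction of an arbitrary $\Omega_s\in W$ to $\Omega_{\ell}$ via \eqref{rel-djel-1}. The main obstacle I anticipate is making the filtration argument in (i) fully rigorous: one must check carefully that $G^{-}(i-\hf)$, when applied to a vector of the form $v_{\phi,\mu}$ or more generally $v_{\lambda,\mu}$, produces only terms with the same $\Psi^{+}$-content — this follows because in \eqref{--djelovanje} $G^{-}(i-\hf)$ is a (possibly infinite but locally finite) linear combination of the odd operators $\Psi^{-}(n+i-\hf)$, each of which anticommutes with all $\Psi^{+}(m-\hf)$ and hence either annihilates or merely inserts/removes a $\Psi^{-}$ factor — so it cannot change $\ell(\mu)$, and $G^{+}(i-\hf) = -i\,\Psi^{+}(i-\hf)$ can only raise it. A secondary point requiring a line of care is that the operators appearing in \eqref{rel-djel-1} act on $\Omega_s$ with nonzero coefficient $C_1$; this is the same elementary computation (a product of nonzero scalars of the form $\ell+1-i$ together with the combinatorics of anticommuting $\Psi^{-}$'s past the $\Psi^{+}$'s in $\Omega_s$) that was used in Proposition \ref{ired-a-1}, so I would simply cite it.
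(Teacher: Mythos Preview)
Your argument for part~(i) contains a genuine error. You claim that each $\Psi^{-}(r)$ ``anticommutes with all $\Psi^{+}(m-\hf)$'' and therefore cannot alter the $\Psi^{+}$--content of a basis vector. This is false: the Clifford relations give $\{\Psi^{-}(r),\Psi^{+}(-r)\}=1$, so $\Psi^{-}(r)$ \emph{removes} a factor $\Psi^{+}(-r)$. Consequently the operators $G^{-}(i-\hf)$, which by \eqref{--djelovanje} are linear combinations of $\Psi^{-}$'s, do lower $\ell(\mu)$. Indeed, Lemma~\ref{schur-1} already exhibits this explicitly:
\[
G^{-}(\tfrac{1}{2})\cdots G^{-}(\ell-\tfrac{1}{2})\,\Omega_{\ell}=(-1)^{\ell}\ell!\,S_{\ell}(-\chi)\,{\vak},
\]
so when $S_{\ell}(-\chi)\ne 0$ one reaches ${\vak}$ from $\Omega_{\ell}$ by $G^{-}$'s alone. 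Your filtration argument never invokes the hypothesis $S_{\ell}(-\chi)=0$; since without that hypothesis $\widetilde{F}_{\chi}$ is irreducible (Proposition~\ref{ired-a-1}) and hence ${\vak}\in U_{\chi}$, any proof of (i) that ignores $S_{\ell}(-\chi)=0$ must be wrong.

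The paper's route is genuinely different. It first produces from $\Omega_{\ell}$ the nonzero vector
\[
w=G^{-}(\tfrac{3}{2})\cdots G^{-}(\ell-\tfrac{1}{2})\,\Omega_{\ell}\in\widetilde{F}_{\chi}^{1},
\]
and uses Lemma~\ref{schur-1} together with $S_{\ell}(-\chi)=0$ to check that $G^{\pm}(n-\hf)\,w=0$ for all $n\ge 1$; thus $w$ is a singular vector and $U_{\chi}=U(\cA).w$. One then argues via the lowest $L^{f}(0)$--degree component of the spanning vectors
\[
G^{-}(-n_{1}-\tfrac{1}{2})\cdots G^{-}(-n_{r}-\tfrac{1}{2})\,G^{+}(-m_{1}-\tfrac{1}{2})\cdots G^{+}(-m_{s}-\tfrac{1}{2})\,w
\]
that ${\vak}$ cannot occur, giving properness. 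For part~(ii) your use of Lemma~\ref{omega} is correct, but the subsequent detour through the (invalid) filtration to exclude $s<\ell$ is unnecessary: as you yourself note, relation~\eqref{rel-djel-2} already yields $\Omega_{\ell}\in W$ when $s<\ell$, and \eqref{rel-djel-1} handles $s>\ell$, which is exactly how the paper finishes.
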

\begin{proof}
Assume that $S_{\ell} (-\chi)= 0$. Define
\bea w &=& G^{-}(\tfrac{3}{2}) \cdots G^{-} (\ell - \hf)
\Omega_{\ell} \nonumber \\
&=& ((-1) ^{\ell-1} ( \ell -1)! \Psi^{+} (-\ell - \hf ) + a_1 \Psi^{+} (-\ell
+ \hf )+ \cdots + a_{\ell -1} \Psi^{+} (-\tfrac{3}{2} ) ) {\vak}
\nonumber
\eea
where $a_1, \dots , a_{\ell-1}$ are certain complex numbers.

Therefore $w \ne 0$. By using Lemma \ref{schur-1}, the assumption
$S_{\ell} (-\chi)= 0$  and the definition of $w$ we get
$$ G^{\pm}(n-\hf) w = 0 \quad \mbox{for} \ \ n \in {\N}. $$
One can easily show that
$$ G^{+}( -\ell + \hf) \cdots   G^+(-\tfrac{3}{2}) w = C
\Omega_{\ell} \quad (C\ne 0), $$
which implies that $U_{\chi} = U(\cA). w$. Every element of
$U_{\chi}$ is a linear combination of vectors
\bea
\label{span-vect} && G^{-}(-n_1 - \hf) \cdots G^{-}(-n_r - \hf)
G^{+} (-m_1-\hf) \cdots G^{+}(-m_s -\hf) w,
\eea
for $n_i, m_i \in {\Zp}$, $ n_1 > n_2 > \cdots > n_r$, $m_1 > m_2
> \cdots > m_s$.
But a vector  (\ref{span-vect}) is either zero (if $G^{+}
(-m_1-\hf) \cdots G^{+}(-m_s -\hf) w = 0$) or has  the following
non-trivial summand of lowest degree in $\widetilde{F}$ (with
respect to $L^f(0)$)
$$ C  \Psi^{-}(-n_1 - \hf) \cdots \Psi^{-}(-n_r - \hf)
\Psi^{+} (-m_1-\hf) \cdots \Psi^{+}(-m_s -\hf)  w
$$
where $C \ne 0$.  From this one gets that ${\vak} \notin U_{\chi}$.
Therefore $\widetilde{F}_{\chi}$ is a reducible module with the
proper submodule $U_{\chi}$. This proves assertion (i).

Assume now that $U \subset U_{\chi}$ is a non-zero submodule. Then
Lemma \ref{omega} implies that  there is $s \in {\Zp}$ such that
$\Omega_s \in U$. By using relations (\ref{rel-djel-1}) and
(\ref{rel-djel-2}) from the proof of Proposition \ref{ired-a-1} we
see that $\Omega_{\ell} \in U$. Therefore  $U = U(\cA)
\Omega_{\ell} = U_{\chi}$ and $U_{\chi}$ is an  irreducible
${\cA}$--module. This proves assertion (ii). \end{proof}

 \begin{proposition} \label{red-2}
Assume that $ \ell \in {\Z}$, $\ell < 0$.
\item[(i)] $\widetilde{F}_{\chi}$ is reducible and $J_{\chi} =
U(\cA). 1$ is its  proper submodule.

\item[(ii)] $J_{\chi} $ is an irreducible $\cV$--module.
 \end{proposition}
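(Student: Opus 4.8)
The plan is to mimic the structure of Propositions \ref{ired-a-1} and \ref{red-1}, but now in the regime $\ell<0$. The key observation is that when $\ell<0$ the coefficient $(\ell+1)$ of $\Psi^-(-\tfrac{3}{2})$ appearing in $G^-(-\hf){\vak}$ (via (\ref{--djelovanje})) together with the subsequent coefficients $\ell, \ell-1,\dots$ will vanish for the index $i$ with $\ell+1-i=0$, i.e.\ $i=\ell+1\le 0$; but for the nonnegative modes that actually act on ${\vak}$ the coefficients are $\ell+1-i$ with $i\le 0$, which are all negative integers, hence nonzero. More precisely, I would first check that ${\vak}$ is \emph{not} cyclic: applying any product of $G^{\pm}$-modes to ${\vak}$ can never produce $\Psi^-(-\hf)\cdots\Psi^-(-N-\hf){\vak}$ with the ``correct'' top coefficient, because the relevant Schur-type determinant now has a zero row/entry. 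This should show $J_{\chi}=U(\cA).{\vak}$ is a proper submodule, proving (i).

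For part (ii), the strategy is identical to Proposition \ref{red-1}(ii): take a nonzero submodule $U\subset J_{\chi}$, invoke Lemma \ref{omega} to get $\Omega_s\in U$ for some $s\in{\N}$, and then use the relations (\ref{rel-djel-1})--(\ref{rel-djel-2}) to move between the $\Omega_s$'s. The new point, compared to the $\ell\in{\N}$ case, is determining which $\Omega_s$ is ``minimal'' for the submodule $J_{\chi}$ — here, because $\ell<0$, applying $G^-$ modes to $\Omega_s$ can never bring it down to the vacuum (the vacuum isn't even in $J_{\chi}$), so the role previously played by $\Omega_\ell$ should instead be played by $\Omega_0={\vak}$ itself: one shows $\Omega_s\in U$ forces ${\vak}\in U$ via repeated application of $G^-(\ell'+\tfrac{3}{2})\cdots$ type operators whose coefficients $\ell+1-i$ stay nonzero throughout the descent (since $\ell+1\le 0$ and we only hit positive $i$). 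Hence $U\supseteq U(\cA).{\vak}=J_{\chi}$, so $U=J_{\chi}$ and $J_{\chi}$ is irreducible.

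Concretely, the steps in order: (1) write out $G^-(-n-\hf)\cdots G^-(-\hf){\vak}$ using (\ref{--djelovanje}) and identify the nonzero leading coefficient $C=(\ell)(\ell-1)\cdots$ — wait, one must be careful about signs and which coefficients appear; the right statement is that for $\ell<0$ this product is a nonzero multiple of $\Psi^-(-n-\hf)\cdots\Psi^-(-\hf){\vak}$ \emph{plus lower terms}, and crucially the coefficient never vanishes; (2) conclude, as in the proof of (\ref{ciklic}), that $J_\chi=U(\cA).{\vak}$ contains every basis vector $v_{\lambda,\mu}$ with $\lambda\ne\phi$, but — this is the crux — show $J_\chi$ does \emph{not} contain, say, $\Psi^+(-\thf){\vak}=\Omega_1$, by a degree/weight argument on the $\Z$-grading or by noting that $G^+$-modes cannot create $\Psi^+$'s out of nothing and $G^-$-modes applied to ${\vak}$ always produce a strictly positive number of $\Psi^-$ factors; (3) for (ii), run the Lemma \ref{omega} plus (\ref{rel-djel-1})--(\ref{rel-djel-2}) argument to reduce any nonzero submodule to one containing ${\vak}$.

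The main obstacle I expect is step (2): cleanly proving that $J_\chi$ is \emph{proper}, i.e.\ that no combination of $G^{\pm}$-modes applied to ${\vak}$ recovers $\widetilde F_\chi$ entirely. The clean way is probably to exhibit a nonzero functional or a grading obstruction — for instance, showing $\Omega_\ell$ (or rather, the vector $w$ analogous to the one in Proposition \ref{red-1}, now living ``above'' ${\vak}$ rather than below it) generates a submodule \emph{containing} ${\vak}$, so that the inclusion $J_\chi = U(\cA).{\vak} \subsetneq \widetilde F_\chi$ follows from the reducibility and from $\widetilde F_\chi/J_\chi \ne 0$. Establishing that quotient is nonzero amounts to checking $\Omega_1 \notin J_\chi$, which should follow from the explicit triangular form of the $G^-$-action on ${\vak}$ together with the fact that $G^+$-modes only lower the number of $\Psi^+$'s or annihilate — so the $\Psi^+$-degree of any element of $U(\cA).{\vak}$ is governed by a recursion that never increases it from $0$ without first producing $\Psi^-$'s, contradicting membership of the pure vector $\Omega_1$.
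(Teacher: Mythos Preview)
Your argument for (ii) is essentially the paper's: Lemma \ref{omega} gives some $\Omega_s\in U$, and then successive applications of $G^{-}(k+\hf)$ with $k=s,s-1,\dots,1$ send $\Omega_s$ to $(\ell-1)(\ell-2)\cdots(\ell-s)\,{\vak}$, which is nonzero since $\ell<0$. That part is fine.

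Part (i), however, has a genuine error. Your proposed obstruction $\Omega_1\notin J_\chi$ is simply false: from (\ref{++djelovanje}) with $i=-1$ we get $G^{+}(-\thf){\vak}=\Psi^{+}(-\thf){\vak}=\Omega_1$, and more generally $G^{+}(-s-\hf)\cdots G^{+}(-\thf){\vak}$ is a nonzero multiple of $\Omega_s$. So every $\Omega_s$ lies in $J_\chi$, and your ``$\Psi^{+}$-degree'' heuristic collapses. The related arithmetic claim that ``the coefficients $\ell+1-i$ with $i\le 0$ are all negative integers, hence nonzero'' is also wrong: for $i=\ell+1\le 0$ the coefficient $\ell+1-i$ vanishes. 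This is exactly the point: it is the $\Psi^{-}$ side, not the $\Psi^{+}$ side, that is obstructed.

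The paper's fix is to set $q=-\ell-1\ge 0$ and observe that $G^{-}(-q-\hf)$ has leading coefficient $\ell+1-(-q)=0$, so the triangular argument producing $\Psi^{-}(-N-\hf)\cdots\Psi^{-}(-\hf){\vak}$ from ${\vak}$ breaks down precisely at depth $q$: the constant $C=(\ell+1)(\ell+2)\cdots(\ell+N+1)$ from the proof of Proposition \ref{ired-a-1} contains the factor $\ell+1+q=0$ once $N\ge q$. One then argues, in the spirit of Proposition \ref{red-1}, that $\Psi^{-}(-q-\hf){\vak}\notin J_\chi$. That is the missing vector you should be looking for, not $\Omega_1$.
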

\begin{proof} Let $q = - \ell  -1$. Then
$$ G^{}(n-\hf) =- (q +n) \Psi^{-}(n-\hf) +  \sum_{n=1}
^{\infty} {\chi}_{-n} \Psi^{-} (n+i -{\hf}).$$
By using similar arguments as in the proof of Proposition
\ref{red-1} one can see that $\Psi^{-}(-q -\hf) {\vak} \notin
J_{\chi}$ which gives reducibility of $\widetilde{F}_{\chi}$. The
proof  that submodule  $J_{\chi} $ is irreducible is completely
analogous to that of Proposition \ref{red-1} (ii). \end{proof}

Note that $U_{\chi}$ and $J_{\chi} $ are $\Z$--graded ${\cV}$--modules with respect to $J^{f}(0)$:
\bea
U_{\chi} & = & \bigoplus_{i \in {\Z} } U_{\chi} ^{i}, \quad  U_{\chi} ^{i} = \{ v \in U_{\chi} \vert \ J^{f}(0) v = i v \}, \label{grad-U}  \\
J_{\chi} & = & \bigoplus_{i \in {\Z} } J_{\chi} ^{i}, \quad  J_{\chi} ^{i} = \{ v \in J_{\chi} \vert \ J^{f}(0) v = i v \}. \label{grad-J}
\eea

\vskip 5mm

 Now we are able to classify $\chi \in {\C}((z))$ such that
 $\widetilde{F}_{\chi}$ is irreducible. We have proved the
 following classification result.

 \begin{theorem} \label{irreducibility-1}
Assume that $\chi \in {\C}((z))$. Then the ${\cV}$--module
$\widetilde{F}_{\chi}$ is irreducible if and only if $\chi$
satisfies one of the following conditions:
\item[(i)] There is $p \in {\Zp}$, $p \ge 1 $ such that
$$ {\chi}(z) = \sum_{n=-p} ^{\infty} {\chi}_{-n} z ^{n-1} \in
{\C}((z)) \quad \mbox{and} \quad
\chi_p \ne 0. $$

\item[(ii)] $$ {\chi}(z) = \sum_{n=0} ^{\infty} {\chi}_{-n} z
^{n-1} \in {\C}((z)) \quad \mbox{and} \quad
\chi_0 \in \{1 \} \cup ({\C} \setminus {\Z}). $$
\item[(iii)] There is $\ell \in {\Zp}$ such that
$$ {\chi}(z) =  \frac{\ell +1}{z} + \sum_{n=1} ^{\infty} {\chi}_{-n} z ^{n-1} \in
{\C}((z))$$
and $S_{\ell}(-\chi) \ne 0$.
\end{theorem}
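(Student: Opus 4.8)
The plan is to assemble the classification from the partial results already proved, after reducing the general $\chi \in {\C}((z))$ to one of the three normal forms treated above. First I would observe that any $\chi(z) \in {\C}((z))$ has the form $\chi(z) = \sum_{n=-p}^{\infty} \chi_{-n} z^{n-1}$ for some $p \in {\Zp}$, so exactly one of the following holds: (a) $p \ge 1$ and $\chi_p \ne 0$ (the genuine pole of order $\ge 2$ case); (b) $p = 0$, i.e.\ $\chi$ has at most a simple pole, with leading coefficient $\chi_0$; (c) after possibly shrinking $p$, we land in case $p=0$ but must examine the residue $\chi_0$. In case (a) Proposition \ref{ired1} (with condition (\ref{uvjet-prvi})) immediately gives that $\widetilde{F}_{\chi}$ is irreducible, which is exactly conclusion (i); so (i) is always sufficient. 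The content of the theorem is therefore the analysis of case $p=0$, where the module structure is governed by (\ref{++djelovanje1})--(\ref{--djelovanje1}) with the parameter $\chi_0$ playing the role of a leading term; equivalently, writing $\chi_0 = \ell+1$, this is precisely the setup (\ref{oblik}) with the action (\ref{++djelovanje})--(\ref{--djelovanje}).

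Next I would split the $p=0$ analysis according to the value of $\ell = \chi_0 - 1$. If $\ell \notin {\Z}$, or $\ell = 0$ (i.e.\ $\chi_0 \in \{1\} \cup ({\C}\setminus{\Z})$), then Proposition \ref{ired1} via condition (\ref{uvjet-drugi}) gives irreducibility, yielding the sufficiency of (ii). If $\ell \in {\N}$, then Proposition \ref{ired-a-1} gives irreducibility when $S_{\ell}(-\chi) \ne 0$, which is the sufficiency of (iii); and Proposition \ref{red-1}(i) gives reducibility (with proper submodule $U_{\chi}$) when $S_{\ell}(-\chi) = 0$. Finally, if $\ell \in {\Z}$ with $\ell < 0$, i.e.\ $\chi_0 \in {\Z}_{\le 0}$, then Proposition \ref{red-2}(i) gives reducibility. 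Collecting these: $\widetilde{F}_{\chi}$ is irreducible precisely when we are in case (a) [$\Leftrightarrow$ (i)], or $p=0$ with $\chi_0 \in \{1\}\cup({\C}\setminus{\Z})$ [$\Leftrightarrow$ (ii)], or $p=0$ with $\chi_0 = \ell+1$, $\ell \in {\N}$, and $S_{\ell}(-\chi)\ne 0$ [$\Leftrightarrow$ (iii)]; and it is reducible in the two remaining cases ($\ell\in{\N}$ with $S_\ell(-\chi)=0$, and $\ell\in{\Z}_{<0}$). Since (i), (ii), (iii) are manifestly mutually exclusive and exhaust all irreducible cases, this is exactly the asserted equivalence.

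The one genuinely delicate point — and the place where I expect the argument to require care rather than mere bookkeeping — is the \emph{necessity} direction, namely verifying that the reducibility statements Propositions \ref{red-1}(i) and \ref{red-2}(i) truly cover \emph{all} $\chi$ outside (i)--(iii). Concretely, one must be sure that when $p=0$ and $\chi_0 \in {\Z}$ the substitution $\chi_0 = \ell+1$ places us with $\ell \in {\Zp}$ or $\ell \in {\Z}_{<0}$ with no gap, and that the $\ell = 0$ subcase (which is irreducible by Proposition \ref{ired1}) is consistently recorded as part of (ii) rather than (iii) — note the statement of (iii) in Theorem \ref{irreducibility-1} allows $\ell \in {\Zp}$, so $\ell=0$ is formally subsumed there as well, and one should remark that the two descriptions of the $\ell=0$ case agree because $S_0(-\chi) = 1 \ne 0$ always. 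Apart from this reconciliation at the boundary $\ell = 0$, the proof is a direct citation of Propositions \ref{ired1}, \ref{ired-a-1}, \ref{red-1}, and \ref{red-2}, together with the trivial observation that every Laurent series falls into exactly one of the listed normal forms.
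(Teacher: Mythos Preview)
Your proposal is correct and matches the paper's approach: the paper itself offers no separate proof of Theorem \ref{irreducibility-1}, stating only ``We have proved the following classification result'' and leaving the reader to assemble Propositions \ref{ired1}, \ref{ired-a-1}, \ref{red-1}, and \ref{red-2} exactly as you do. Your added care in reconciling the boundary case $\ell = 0$ (noting $S_0(-\chi)=1$) and in checking that the reducibility propositions exhaust the complement of (i)--(iii) is a welcome clarification of what the paper leaves implicit.
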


\section{Wakimoto modules}

We shall first recall the definition of the Wakimoto modules at
the critical level (cf. \cite{efren}, \cite{W-mod}).

 The Weyl vertex
algebra $W$ is generated by the fields
$$ a(z)   = \sum_{n \in {\Z} } a(n) z^{-n-1}, \ \ a^{*}(z) =  \sum_{n \in {\Z} } a^{*}(n)
z^{-n}, $$
whose  components satisfy the commutation relations for the
infinite-dimensional Weyl algebra

 $$[a(n), a(m)] = [a^{*}(n), a^{*}(m)] = 0, \quad [a(n),
a^{*}(m)] = \delta_{n+m,0} .$$

  Assume that ${\chi}(z) \in {\C}((z))$.

  On the vertex algebra $W$ exists the structure of the $A_1
^{(1)}$--module at the critical level defined by

  \bea
e(z)&=& a(z), \nonumber \\
h(z) &=& - 2 :  a^{*}(z) a(z): - {\chi}(z)           \nonumber \\
f(z) & =&   - : a^{*} (z)  ^{2} a(z) : -2 \partial_{z} a^{*} (z) -
a^{*}(z) {\chi}(z) . \nonumber \eea

 This module is called the  Wakimoto module and it is denoted by $W_{-\chi}$.

 Let $F_{-1}$ be
the lattice vertex superalgebra $V_{L}$ associated to the lattice
$L={\Z}\beta$, where $\la \beta, \beta \ra = -1$ (cf.
\cite{A3},\cite{K}, \cite{LL}). Then $F_{-1}$ has the following
$\Z$--gradation (cf. \cite{A-2007}):
$$F_{-1} = \bigoplus_{j \in {\Z} } F_{-1} ^{j}, \quad F_{-1} ^{j}
= \{ v \in F_{-1} \ \vert \ \beta(0) v = - j v \}. $$

In \cite{A-2007}, we constructed   mappings ${\mathcal L}_s$, $s
\in {\Z}$, from the category of ${\cV}$--modules to the category
of $A_1 ^{(1)}$--modules at the critical level. Let $V_{-2}(sl_2)$ denotes the universal affine vertex algebra for $A_1 ^{(1)}$ at the critical level, and $M_T (0)$ be the commutative subalgebra of ${\cV}$ generated by the field $T(z)$.

\begin{theorem}[\cite{A-2007}, Theorem 6.2] \label{a-6.2}Assume that $U$ is a ${\cV}$--module which admits the following graduation:
$$ U = \bigoplus_{j \in {\Z} } U^{j}, \quad {\cV} ^{i} \cdot U^{j} \subset U^{i+j}. $$
Then $$ U \otimes F_{-1} = \bigoplus_{s \in {\Z} } \mathcal{L}_s (U)  \quad   \mathcal{L}_s (U)  = \bigoplus_{i \in {\Z}} U^{i } \otimes F_{-1} ^{-s + i}$$
and each $\mathcal{L}_s (U)$ is an $V_{-2}(sl_2) \otimes M_T(0)$--module. If $U$ is irreducible, then  $\mathcal{L}_s (U)$ is an irreducible $A_1 ^{(1)}$--module at the critical level.
\end{theorem}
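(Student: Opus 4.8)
The plan is to treat the three assertions in turn: the first two are essentially formal, and the last reduces to one nontrivial structural fact about the vertex superalgebra $\cV\otimes F_{-1}$. First, the decomposition $U\otimes F_{-1}=\bigoplus_{s\in\Z}\mathcal{L}_s(U)$ is just a reindexing. Let $d$ be the grading operator of the $\cV$--module $U$, so $d|_{U^{i}}=i$, and recall $F_{-1}^{j}=\ker(\beta(0)+j)$. On $U\otimes F_{-1}$ set $H:=d\otimes 1+1\otimes\beta(0)$. On $U^{i}\otimes F_{-1}^{-s+i}$ the operator $\beta(0)$ acts as $-(-s+i)=s-i$, so $H$ acts there by the scalar $i+(s-i)=s$. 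Since $U$ and $F_{-1}$ are $\Z$--graded, $H$ is diagonalizable with integral spectrum and the subspaces $U^{i}\otimes F_{-1}^{j}$ refine its eigenspace decomposition; hence $\mathcal{L}_s(U)=\ker(H-s)$ and $U\otimes F_{-1}=\bigoplus_{s}\mathcal{L}_s(U)$.

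For the module structure I would realize the $\widehat{sl_2}$--currents at the critical level inside $\cV\otimes F_{-1}$ by inverting the Kazama--Suzuki map --- equivalently, by the Friedan--Martinec--Shenker bosonization underlying the Wakimoto construction recalled at the beginning of this section. Schematically (up to normalization and lower--order corrections) one sets
$$ e(z)\sim G^{+}(z)\,Y(e^{\beta},z),\qquad f(z)\sim G^{-}(z)\,Y(e^{-\beta},z), $$
takes $h(z)$ to be an appropriate combination of $T(z)$ and the Heisenberg field $\beta(z)$, and lets $M_T(0)$ be generated by a field built from $S(z),T(z),\beta(z),\beta(z)^{2}$. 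A direct computation with operator products, using the $\cA$--relations for $G^{\pm},S,T$ with $C$ acting as $-3$ together with the lattice operator products governed by $\la\beta,\beta\ra=-1$, shows that $e,h,f$ satisfy the $\widehat{sl_2}$ relations at level $-2$: the factor $(z-w)^{\la\beta,-\beta\ra}=(z-w)$ in $Y(e^{\beta},z)Y(e^{-\beta},w)$ lowers by one the order of the pole produced by $G^{+}(z)G^{-}(w)$, and the value $C\mapsto-3$ makes the resulting level exactly the critical one; meanwhile $S,T,\beta,\beta^{2}$ give a commuting copy of $M_T(0)$. The decisive point is the charge bookkeeping: $G^{\pm}$ shifts the $d$--grading by $\pm1$ while $Y(e^{\pm\beta},z)$ shifts $\beta(0)$ by $\mp1$, so $e(z),h(z),f(z)$ and the generators of $M_T(0)$ all commute with $H$ and hence preserve every eigenspace $\mathcal{L}_s(U)=\ker(H-s)$. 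This equips each $\mathcal{L}_s(U)$ with the structure of a $V_{-2}(sl_2)\otimes M_T(0)$--module. I would also note that $T(z)$ has trivial singular operator product with every generator of $\cV$ and is therefore central in $\cV$; so on an irreducible $U$ the subalgebra $M_T(0)$ acts through a character, and irreducibility as a $V_{-2}(sl_2)\otimes M_T(0)$--module coincides with irreducibility as an $A_1^{(1)}$--module.

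For the last assertion, assume $U$ is irreducible. Rank--one lattice vertex superalgebras are simple, so $F_{-1}$ is irreducible over itself, and one checks that $U\otimes F_{-1}$ is then an irreducible $\cV\otimes F_{-1}$--module whose $H$--eigenspaces are exactly the $\mathcal{L}_s(U)$. Writing $\cV\otimes F_{-1}=\bigoplus_{c\in\Z}A^{c}$ with $A^{c}=\ker(H-c)$, so that $A^{0}=\bigoplus_{m}\cV^{m}\otimes F_{-1}^{m}$, the full algebra $\cV\otimes F_{-1}$ is generated over $A^{0}$ by the mutually inverse lattice sectors $\vak\otimes e^{\pm\beta}$; a standard argument --- propagate a proper nonzero $A^{0}$--submodule of one $\mathcal{L}_s(U)$ through all sectors by these invertible operators and contradict irreducibility of $U\otimes F_{-1}$ --- then shows that each $\mathcal{L}_s(U)$ is irreducible as an $A^{0}$--module. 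The remaining, decisive point is that $A^{0}$ is generated by the currents $e,h,f$ together with $M_T(0)$ --- equivalently, that the charge--zero part of $\cV\otimes F_{-1}$ contains nothing beyond the image of the critical--level affine vertex algebra times $M_T(0)$ --- which is exactly where the definition of $\cV$ as the subalgebra of $\mathcal{F}$ generated by $\tau^{\pm},j,\nu$ (rather than all of $\mathcal{F}$), and the value $C\mapsto-3$, are used. Granting this, $A^{0}$ is a quotient of $V_{-2}(sl_2)\otimes M_T(0)$, so irreducibility of $\mathcal{L}_s(U)$ over $A^{0}$ gives irreducibility over $V_{-2}(sl_2)\otimes M_T(0)$, hence --- by the previous paragraph --- as an $A_1^{(1)}$--module at the critical level. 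The main obstacle is precisely this identification of the charge--zero subalgebra $A^{0}$; once it is in hand, everything else is formal.
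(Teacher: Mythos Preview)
The paper does not prove this theorem: it is quoted from \cite{A-2007} (Theorem~6.2 there) and stated here without argument, so there is no proof in the present paper to compare your attempt against.

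That said, your outline is in the right spirit and follows the Kazama--Suzuki framework underlying \cite{A-2007}. Two comments. First, your treatment of $M_T(0)$ is off: by definition $M_T(0)$ is the commutative subalgebra of $\cV$ generated by the single field $T(z)$, and since $j\in\cV^{0}$ the operators $T(n)$ already preserve each $U^{i}$; hence $M_T(0)$ acts on $\mathcal{L}_s(U)$ through the first tensor factor, and there is no need to manufacture a generator out of $S,T,\beta,\beta^{2}$. Second, you correctly isolate the crux --- that the charge--zero subalgebra $A^{0}\subset\cV\otimes F_{-1}$ is exhausted by the image of $V_{-2}(sl_2)\otimes M_T(0)$ --- but you do not prove it; you explicitly ``grant'' it. Without that identification (or an equivalent device, such as the inverse functor from $A_1^{(1)}$--modules back to $\cV$--modules that \cite{A-2007} constructs and uses to transport irreducibility in both directions), the irreducibility claim is not established. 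Your write--up is a reasonable plan, but it defers precisely the step that carries the weight.
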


 In particular, the
map ${\mathcal L}_0$ sends ${\cV}$--module $ \widetilde{F}_{\chi}$
to the Wakimoto module $W_{-\chi}$ and
$$ W_{-\chi} \cong \mathcal{L}_0 (\widetilde{F}_{\chi} ) = \bigoplus_{j \in {\Z} } \widetilde{F} _{\chi} ^
j \otimes F_{-1} ^{j}. $$

Recall first:

\begin{theorem} \label{ired-w-1} (\cite{A-2007}) Assume that  $
\widetilde{F}_{\chi}$ is an irreducible ${\cV}$--module. Then
$W_{-\chi}$ is irreducible $A_1 ^{(1)}$--module at the critical
level.
\end{theorem}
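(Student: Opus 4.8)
The plan is to derive the statement from Theorem~\ref{a-6.2} applied to the $\cV$--module $U=\widetilde{F}_{\chi}$, together with the identification $W_{-\chi}\cong\mathcal{L}_0(\widetilde{F}_{\chi})$ recalled just above. The only point one has to check before invoking Theorem~\ref{a-6.2} is that $\widetilde{F}_{\chi}$, equipped with the $\Z$--gradation $\widetilde{F}_{\chi}=\bigoplus_{j\in\Z}\widetilde{F}_{\chi}^{j}$ given by the $J^{f}(0)$--eigenvalue, satisfies the compatibility condition $\cV^{i}\cdot\widetilde{F}_{\chi}^{j}\subset\widetilde{F}_{\chi}^{i+j}$.

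To verify this I would use the spanning description of $\cV^{m}$: it is spanned by the vectors $G^{+}(-n_1-\tfrac{3}{2})\cdots G^{+}(-n_r-\tfrac{3}{2})G^{-}(-k_1-\tfrac{3}{2})\cdots G^{-}(-k_s-\tfrac{3}{2})w$ with $w\in\cV^{com}$ and $r-s=m$. Since $\tau^{\pm}$, and hence the fields $G^{\pm}(z)$, are built only from $\Psi^{\pm}$ and $\gamma^{\pm}$, and since $[J^{f}(0),\Psi^{\pm}(r)]=\pm\Psi^{\pm}(r)$ while the operators $\gamma^{\pm}(n)$ act as scalars on $M(0,0,\chi)$ and so commute with $J^{f}(0)$, each mode $G^{\pm}(r)$ shifts the $J^{f}(0)$--grading of $\widetilde{F}_{\chi}$ by $\pm 1$. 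Similarly $\nu$ and $j$ involve only $\gamma^{\pm}$, so all modes $S(n)$ and $T(n)$, and hence the entire action of $\cV^{com}$, preserve the $J^{f}(0)$--grading. Combining these observations with the spanning set for $\cV^{m}$ gives $\cV^{i}\cdot\widetilde{F}_{\chi}^{j}\subset\widetilde{F}_{\chi}^{i+j}$, as needed.

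With this in hand, Theorem~\ref{a-6.2} applies to $U=\widetilde{F}_{\chi}$: the space $\widetilde{F}_{\chi}\otimes F_{-1}$ decomposes as $\bigoplus_{s\in\Z}\mathcal{L}_s(\widetilde{F}_{\chi})$, each summand is a $V_{-2}(sl_2)\otimes M_T(0)$--module, and since $\widetilde{F}_{\chi}$ is irreducible as a $\cV$--module, each $\mathcal{L}_s(\widetilde{F}_{\chi})$ is an irreducible $A_1^{(1)}$--module at the critical level. In particular $\mathcal{L}_0(\widetilde{F}_{\chi})=\bigoplus_{j\in\Z}\widetilde{F}_{\chi}^{j}\otimes F_{-1}^{j}$ is irreducible, and by the identification $W_{-\chi}\cong\mathcal{L}_0(\widetilde{F}_{\chi})$ this is precisely the Wakimoto module $W_{-\chi}$. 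I expect the grading-compatibility check to be the only genuinely new argument needed here; the substantive input — the construction of the functors $\mathcal{L}_s$ from the Clifford--lattice free-field realization and the verification that $\mathcal{L}_0(\widetilde{F}_{\chi})$ reproduces the Wakimoto currents $e(z),h(z),f(z)$ with parameter $\chi$ — is carried out in \cite{A-2007}.
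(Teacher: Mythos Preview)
Your proposal is correct and matches the paper's approach: the theorem is quoted from \cite{A-2007} without a separate proof here, and the argument there is exactly the one you outline --- apply Theorem~\ref{a-6.2} to the $\Z$--graded $\cV$--module $\widetilde{F}_{\chi}$ and use the identification $W_{-\chi}\cong\mathcal{L}_0(\widetilde{F}_{\chi})$. The grading-compatibility check you spell out is implicit in the construction and is the only thing one needs to verify before invoking Theorem~\ref{a-6.2}.
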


In the case of Wakimoto modules the converse is also true.
\begin{theorem} \label{red-w-1}
Assume that $ \widetilde{F}_{\chi}$ is reducible. Then the
Wakimoto module $W_{-\chi}$ is also reducible.
\end{theorem}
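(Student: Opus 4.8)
The plan is to prove the contrapositive in disguise: we already have, via Theorem \ref{irreducibility-1}, a complete list of the $\chi$ for which $\widetilde{F}_{\chi}$ is \emph{irreducible}, so assuming $\widetilde{F}_{\chi}$ is reducible puts us in exactly one of three remaining cases — (a) $\chi(z) = (\ell+1)/z + \sum_{n\ge 1}\chi_{-n}z^{n-1}$ with $\ell \in \N$ and $S_{\ell}(-\chi) = 0$; (b) $\chi(z) = \sum_{n\ge 0}\chi_{-n}z^{n-1}$ with $\chi_0 \in \Z\setminus\{1\}$, which after a shift of $\ell$ either falls under (a) or under (c); and (c) $\chi$ of the form \eqref{oblik} with $\ell \in \Z$, $\ell < 0$. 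In each of these cases Propositions \ref{red-1} and \ref{red-2} produce an explicit proper nonzero ${\cV}$-submodule of $\widetilde{F}_{\chi}$, namely $U_{\chi} = U({\cA}).\Omega_{\ell}$ in case (a) and $J_{\chi} = U({\cA}).\vak$ in case (c), and these are moreover $\Z$-graded with respect to $J^{f}(0)$ by \eqref{grad-U}–\eqref{grad-J}.

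The key step is then to transport this submodule across the functor $\mathcal{L}_0$. Since $U_{\chi}$ (resp. $J_{\chi}$) is a $J^{f}(0)$-graded ${\cV}$-submodule of $\widetilde{F}_{\chi}$, compatible with the $\Z$-gradation ${\cV}^{i}\cdot \widetilde{F}_{\chi}^{\,j}\subset \widetilde{F}_{\chi}^{\,i+j}$, Theorem \ref{a-6.2} applies verbatim to it as well. Concretely, I would set
$$ \mathcal{L}_0(U_{\chi}) = \bigoplus_{i \in \Z} U_{\chi}^{i} \otimes F_{-1}^{\,i} \subset \bigoplus_{i \in \Z} \widetilde{F}_{\chi}^{\,i} \otimes F_{-1}^{\,i} \cong W_{-\chi}, $$
and analogously $\mathcal{L}_0(J_{\chi})$ in case (c). Because $\beta(0)$ acts on $F_{-1}$ with each integral eigenvalue occurring on an infinite-dimensional (in particular nonzero) graded piece, and because $U_{\chi}^{i}$ is a proper subspace of $\widetilde{F}_{\chi}^{\,i}$ for at least one $i$ while being nonzero for some $i$ (e.g. $\Omega_{\ell} \in U_{\chi}^{0}$, whereas $\vak \notin U_{\chi}$ shows $U_{\chi}^{0} \subsetneq \widetilde{F}_{\chi}^{\,0}$, as established in the proof of Proposition \ref{red-1}), the submodule $\mathcal{L}_0(U_{\chi})$ is a nonzero proper $V_{-2}(sl_2)$-submodule of $W_{-\chi}$. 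Hence $W_{-\chi}$ is reducible.

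The only real obstacle is the bookkeeping for case (b): one must check that when $\chi_0 \in \Z_{\le 0}$ the module $\widetilde{F}_{\chi}$ is isomorphic, after the reindexing $\ell := \chi_0 - 1$ implicit in passing from \eqref{--djelovanje1} to \eqref{--djelovanje}, to one of the modules covered by Proposition \ref{red-1} (when $\chi_0 - 1 \in \N$, i.e. $\chi_0 \ge 2$) or Proposition \ref{red-2} (when $\chi_0 - 1 < 0$, i.e. $\chi_0 \le 0$), so that a proper submodule is available in all reducible cases; this is routine once one notes that the defining action of ${\cA}$ on $\widetilde{F}$ depends on $\chi$ only through the coefficients appearing in \eqref{--djelovanje1}. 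After that, the argument is uniform: reducibility of $\widetilde{F}_{\chi}$ hands us an explicit $J^{f}(0)$-graded proper submodule, and $\mathcal{L}_0$ carries it to a proper submodule of $W_{-\chi}$. $\qed$
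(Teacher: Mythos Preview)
Your argument is correct, but it takes a different route from the paper. The paper's proof is uniform and does not invoke the classification at all: given any nonzero proper submodule $N\subset\widetilde{F}_{\chi}$, Lemma~\ref{omega} produces some $\Omega_s\in N$; one then sets $U=U(\cA).\Omega_s\subseteq N$, notes that $U$ is automatically $J^{f}(0)$--graded (being generated by a homogeneous vector under operators that shift $J^{f}(0)$--degree), and applies Theorem~\ref{a-6.2} to conclude that $\mathcal{L}_0(U)\subset W_{-\chi}$ is a nonzero proper $A_1^{(1)}$--submodule. No case split, no appeal to Theorem~\ref{irreducibility-1} or to Propositions~\ref{red-1}--\ref{red-2}.

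Your approach instead uses Theorem~\ref{irreducibility-1} to enumerate the reducible $\chi$, and then feeds the \emph{specific} submodules $U_{\chi}$, $J_{\chi}$ from Propositions~\ref{red-1}--\ref{red-2} into $\mathcal{L}_0$. This is logically sound (those propositions are already in hand), and it has the side benefit of naming the resulting submodule of $W_{-\chi}$ explicitly --- which is precisely what the paper does afterwards in Corollary~\ref{reduc-struktura}. The cost is the bookkeeping you flagged in case~(b), plus a dependence on the full classification that the paper's short argument avoids. One small slip: $\Omega_{\ell}$ has $J^{f}(0)$--weight $\ell$, not $0$, so $\Omega_{\ell}\in U_{\chi}^{\ell}$ rather than $U_{\chi}^{0}$; your properness argument still goes through since $\vak\in\widetilde{F}_{\chi}^{0}\setminus U_{\chi}^{0}$ and $F_{-1}^{0}\neq 0$.
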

\begin{proof}Assume that $N \subset \widetilde{F}_{\chi}$ is any
proper submodule. Take  $s \in \N$ such that $\Omega_s \in N$ (cf.
Lemma \ref{omega}) and define $U = U(\cA). \Omega_s \subseteq N$.
Then $U$ admits the $\Z$--gradation
$$ U = \bigoplus_{j \in {\Z} } U ^{j} $$
where
$$ U ^{j} = \{ v \in U \ \vert \ J ^{f} (0) v = j v
\} \subset \widetilde{F}_{\chi} ^{j}.$$
Then by using Theorem \ref{a-6.2} we conclude that
$$\mathcal{L}_0 (U) = \bigoplus_{j \in {\Z} } U ^j
\otimes F_{-1} ^j $$
is an $A_{1} ^{(1)}$--module and it is a proper submodule of the
Wakimoto module $W_{-\chi}$. The proof follows. \end{proof}

\begin{corollary}
The Wakimoto module $W_{-\chi}$ is irreducible if and only if
$\chi(z) \in {\C}((z))$ satisfies one of the conditions (i)-(iii)
of Theorem \ref{irreducibility-1}.
\end{corollary}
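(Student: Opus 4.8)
The plan is simply to assemble the three irreducibility statements already proved in the excerpt into one equivalence. By Theorem \ref{irreducibility-1}, the conditions (i)--(iii) on $\chi$ are exactly equivalent to irreducibility of the $\cV$--module $\widetilde{F}_{\chi}$. Hence it suffices to show that the Wakimoto module $W_{-\chi}$ is irreducible as an $A_1^{(1)}$--module at the critical level \emph{if and only if} $\widetilde{F}_{\chi}$ is irreducible as a $\cV$--module; the corollary then follows by transitivity.

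For the reverse implication of this auxiliary equivalence I would invoke Theorem \ref{ired-w-1}: if $\widetilde{F}_{\chi}$ is an irreducible $\cV$--module, then $W_{-\chi}\cong\mathcal{L}_0(\widetilde{F}_{\chi})$ is irreducible. For the forward implication I would use the contrapositive of Theorem \ref{red-w-1}: if $\widetilde{F}_{\chi}$ is reducible, then (via the functor $\mathcal{L}_0$ and the grading argument in the proof of that theorem) $W_{-\chi}$ is reducible; equivalently, irreducibility of $W_{-\chi}$ forces irreducibility of $\widetilde{F}_{\chi}$. Chaining these,
$$
W_{-\chi}\ \text{irreducible} \iff \widetilde{F}_{\chi}\ \text{irreducible } \cV\text{--module} \iff \chi\ \text{satisfies one of (i)--(iii)},
$$
which is the assertion.

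Since every ingredient is already available, there is no genuine obstacle; the only thing to check carefully is that Theorems \ref{ired-w-1} and \ref{red-w-1} really furnish the two opposite halves of a single two--sided equivalence (necessity and sufficiency of irreducibility of $\widetilde{F}_{\chi}$ for that of $W_{-\chi}$), with no gap between them — and they do, being precisely the two contrapositive directions. One may also note in passing, using Propositions \ref{red-1}(ii) and \ref{red-2}(ii) together with Theorem \ref{a-6.2}, that in the reducible case $W_{-\chi}$ still contains the irreducible submodule $\mathcal{L}_0(U_{\chi})$ or $\mathcal{L}_0(J_{\chi})$, as claimed in the introduction, though this is not needed for the corollary itself.
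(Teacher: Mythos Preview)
Your argument is correct and is exactly the approach the paper intends: the corollary is an immediate consequence of combining Theorem~\ref{irreducibility-1} with Theorems~\ref{ired-w-1} and~\ref{red-w-1}, which together give the equivalence $W_{-\chi}$ irreducible $\Longleftrightarrow$ $\widetilde{F}_{\chi}$ irreducible. The paper states it without proof for this reason; your added remark about the irreducible submodules in the reducible case is likewise what the paper records in the subsequent Corollary~\ref{reduc-struktura}.
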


In the case when the module $W_{-\chi}$ is reducible, it contains
irreducible submodule.
\begin{corollary} \label{reduc-struktura}
Let $\chi(z) = \frac{\ell+1}{z} + \sum_{n=1} ^{\infty}{\chi_{-n}}
z ^{n-1}$ and  $\ell \in {\Z}$.

\item[(i)]Assume that $ \ell \in {\N}$ and $S_{\ell} (-\chi) = 0$.
Then $$ \mathcal{L}_0(U_{\chi})= \bigoplus_{i \in {\Z}} U_{\chi} ^i \otimes F_{-1} ^i $$ is an irreducible submodule of
$W_{-\chi}$.

\item[(ii)] Assume that  $\ell < 0$. Then
$$\mathcal{L}_0(J_{\chi}) =  \bigoplus_{i \in {\Z}} J_{\chi} ^i \otimes F_{-1} ^i$$ is an irreducible submodule of $W_{-\chi}$.
\end{corollary}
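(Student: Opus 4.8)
The plan is to obtain both statements as direct applications of Theorem \ref{a-6.2} to the irreducible $\cV$-modules $U_{\chi}$ and $J_{\chi}$ constructed in Propositions \ref{red-1} and \ref{red-2}.

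For part (i), I would first invoke Proposition \ref{red-1}: when $\ell \in \N$ and $S_{\ell}(-\chi) = 0$, the submodule $U_{\chi} = U(\cA).\Omega_{\ell}$ is a proper $\cV$-submodule of $\widetilde{F}_{\chi}$ which is irreducible as a $\cV$-module. Next I would observe that, by (\ref{grad-U}), $U_{\chi} = \bigoplus_{i \in \Z} U_{\chi}^{i}$ is $\Z$-graded by the $J^{f}(0)$-eigenvalue, and that this gradation is compatible with the $\Z$-gradation of $\cV$ in the sense required by Theorem \ref{a-6.2}, i.e. $\cV^{i} \cdot U_{\chi}^{j} \subset U_{\chi}^{i+j}$; this holds because the operators $G^{\pm}(n - \hf)$ raise, respectively lower, the $J^{f}(0)$-eigenvalue by one, so a product of $r$ factors $G^{+}$ and $s$ factors $G^{-}$ shifts it by $r - s$. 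Applying Theorem \ref{a-6.2} with $U = U_{\chi}$ then yields that $\mathcal{L}_0(U_{\chi}) = \bigoplus_{i \in \Z} U_{\chi}^{i} \otimes F_{-1}^{i}$ is an irreducible $A_1^{(1)}$-module at the critical level.

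It then remains to identify $\mathcal{L}_0(U_{\chi})$ with a submodule of $W_{-\chi} = \mathcal{L}_0(\widetilde{F}_{\chi})$. I would argue that, since $U_{\chi}$ is a $\cV$-submodule of $\widetilde{F}_{\chi}$ and the $A_1^{(1)}$-action on $\widetilde{F}_{\chi} \otimes F_{-1}$ provided by the functor $\mathcal{L}_0$ is assembled from the $\cV$-action on the first factor and vertex-algebra operators on $F_{-1}$, the subspace $U_{\chi} \otimes F_{-1}$ is an $A_1^{(1)}$-submodule of $\widetilde{F}_{\chi} \otimes F_{-1}$. Since $U_{\chi}^{i} = U_{\chi} \cap \widetilde{F}_{\chi}^{i}$, intersecting with the $A_1^{(1)}$-submodule $\mathcal{L}_0(\widetilde{F}_{\chi})$ gives $\mathcal{L}_0(U_{\chi}) = (U_{\chi} \otimes F_{-1}) \cap W_{-\chi}$, hence an $A_1^{(1)}$-submodule of $W_{-\chi}$; together with the previous step this establishes (i). (It is automatically a proper submodule, since $U_{\chi}^{i} \subsetneq \widetilde{F}_{\chi}^{i}$ for some $i$.) For part (ii) I would run the identical argument with $J_{\chi} = U(\cA).1$ in place of $U_{\chi}$, using Proposition \ref{red-2} in place of Proposition \ref{red-1}, together with the gradation (\ref{grad-J}).

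The step I expect to need the most care — the only real obstacle — is the verification that $U_{\chi} \otimes F_{-1}$ (resp. $J_{\chi} \otimes F_{-1}$) is stable under the $A_1^{(1)}$-action and that $\mathcal{L}_0$ of the submodule is exactly its intersection with $W_{-\chi}$. This is where one must use the explicit description of the functor $\mathcal{L}_0$ from \cite{A-2007}: the currents $e(z), h(z), f(z)$ act through operators of the form (operator from $\cV$)$\,\otimes\,$(operator on $F_{-1}$), so $\cV$-submodules tensored with $F_{-1}$ are preserved, and $\mathcal{L}_0(\widetilde{F}_{\chi})$ is precisely the $s = 0$ summand for the grading operator underlying the decomposition of Theorem \ref{a-6.2}. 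Granting this, the rest is purely formal.
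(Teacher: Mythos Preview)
Your proposal is correct and follows essentially the same route as the paper: invoke Propositions \ref{red-1} and \ref{red-2} for the irreducibility and $\Z$--gradation of $U_{\chi}$ and $J_{\chi}$, then apply Theorem \ref{a-6.2}. The paper's proof is terser and does not spell out the verification that $\mathcal{L}_0$ carries $\cV$--submodules to $A_1^{(1)}$--submodules of $W_{-\chi}$, treating this as implicit in the construction from \cite{A-2007}; your added paragraph on this point is a reasonable elaboration rather than a different argument.
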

\begin{proof}  Propositions \ref{red-1} and \ref{red-2} imply that
$U_{\chi}$ and $J_{\chi}$ are irreducible  ${\cV}$-modules  which
are $\Z$ graded with   graduations (\ref{grad-U}) and  (\ref{grad-J}). Then  Theorem \ref{a-6.2} implies that ${\mathcal L}_0(U_{\chi})$ and
$\mathcal{L}_0(J_{\chi})$ are irreducible $A_{1}^{(1)}$--modules.
The proof follows. \end{proof}

\begin{remark}
In the case of reducible Wakimoto modules from Corollary \ref{reduc-struktura} one can consider the action of $sl_2$ on $W_{-\chi}$ and the maximal $sl_2$--integrable submodule
$W_{-\chi} ^{int}$ of $W_{-\chi}$. It is clear that $W_{-\chi} ^{int}$ is a proper $A_{1} ^{(1)}$-- submodule of $W_{-\chi}$. By combining our results and  the results from \cite{FG} and \cite{ACM} one can easily show that
 $\mathcal{L}_0(U_{\chi}) = W_{-\chi} ^{int}  $ when $ \ell > 0$ (resp. $\mathcal{L}_0(J_{\chi}) = W_{-\chi} ^{int}$ when $ \ell < 0$). So our method shows that the maximal integrable submodule of the Wakimoto module $W_{-\chi}$ is irreducible $\widehat{sl_2}$--module at the critical level.
\end{remark}

\begin{remark}
It is interesting to look at the case $\ell =1$ and $ \chi(z) =
\frac{2}{z} +
 \sum_{n=1} ^{\infty} \chi_{-n} z ^{n-1}$. Then the Wakimoto
 module $W_{-\chi}$ is irreducible  if $\chi_{-1} \ne 0$ and
 reducible if $\chi_{-1} = 0$.
 The reducible Wakimoto modules
 $W_{-\chi}$ such that
  $$ \chi(z) = \frac{2}{z} +
 \sum_{n=2} ^{\infty} \chi_{-n} z ^{n-1}$$
  (i.e., $\chi_{-1} = 0$) were studied in \cite{FFR}.
\end{remark}


\begin{thebibliography}{Lia}



\bibitem [A1] {A3}  D. Adamovi\' c, Representations of the $N=2$
superconformal vertex algebra, Internat. Math. Res. Notices  {\bf
2} (1999) 61-79.







\bibitem [A2]{A-2007} D. Adamovi\' c, Lie superalgebras and
irreducibility of certain $A_1^{(1)}$--modules at the critical
level, Comm. Math. Phys. 270 (2007) 141-161


\bibitem[ACM]{ACM} T. Arakawa, D. Chebotarov, F. Malikov, Algebras of twisted chiral differential operators
and affine localization of $\g$-modules,  Selecta mathematica, new series, vol.17, no. 1, 1-46, 2011, arXiv:0810.4964

\bibitem[F]{efren} E. Frenkel, Lectures on Wakimoto modules,
opers and the center at the critical level, Adv. Math 195 (2005)
297-404.

\bibitem [FB]{FB} E. Frenkel and D. Ben-Zvi, Vertex algebras and
algebraic curves, Mathematical Surveys and Monographs; no. 88,
AMS, 2001.




\bibitem[FF1]{FF0} B. Feigin and E. Frenkel,  Representations of
affine Kac--Moody algebras and bosonization, in {\em Physics and
mathematics of strings}, pp. 271--316, World Scientific, 1990.



\bibitem[FF2]{FF1} B. Feigin and E. Frenkel, Affine Kac-Moody algebras and semi-infinite flag manifolds,
 Comm. Math. Phys. {\bf 128} (1990) 161-189.

\bibitem[FFR]{FFR} B. Feigin, E. Frenkel and N. Reshetikhin, Gaudin Model, Bethe Ansatz and Critical
Level, Comm. Math. Phys. 166 (1994) 27-62.


\bibitem[FG]{FG} E. Frenkel, D. Gaitsgory, Local Geometric Langlands Correspondence: the Spherical Case, in: Algebraic Analysis and Around: In Honor of Professor Masaki Kashiwara's 60th Birthday, in: Adv. Stud. Pure Math., vol. 54, 2009, p. 167,  arXiv:0711.1132

\bibitem[FP]{FP} L. Feher,  B.G. Pusztai, Explicit description of twisted Wakimoto realizations of affine Lie algebras, Nucl.Phys. B674 (2003) 509-532, arxiv: math/0305268



\bibitem [FHL]{FHL}
I. B. Frenkel, Y.-Z. Huang and J. Lepowsky, On axiomatic
approaches to vertex operator algebras and modules, Mem. Amer.
Math. Soc. {\bf 104}, 1993.

\bibitem [FLM]{FLM}
I. B. Frenkel, J. Lepowsky and A. Meurman,   Vertex Operator
Algebras and the Monster,   Pure and Applied Math., Vol. {\bf
134}, Academic Press, New York, 1988.





\bibitem [FST]{FST} B. L. Feigin, A. M. Semikhatov and  I. Yu. Tipunin,
Equivalence between chain categories of representations of affine
$sl(2)$ and $N=2$ superconformal algebras, J. Math.  Phys. {\bf
39} (1998), 3865-390.

\bibitem[FZ]{FZ}
I. B. Frenkel and Y.  Zhu, Vertex operator algebras associated to
representations of affine and Virasoro algebras,  Duke Math. J.
{\bf 66} (1992),  123-168.


\bibitem [K1]{K-b} V. Kac, Infinite dimensional Lie algebras, Third edition, Cambridge Univ. Press, Cambridge, 1990.


\bibitem [K2]{K}  V. Kac,   Vertex Algebras for Beginners, University
Lecture Series, Second Edition,   Amer. Math. Soc., 1998, Vol. 10.

\bibitem [KK]{KK} V. Kac and D. Kazhdan, Structure of
representations with highest weight of infinite dimensional Lie
algebras, Adv. Math. {\bf 34} (1979) 97-108.


\bibitem [KS]{KS} Y. Kazama and H. Suzuki, New N=2 superconformal
field theories and superstring compactifications, Nucl. Phys. B
{\bf 321} (1989), 232-268.







\bibitem [LL]{LL} J. Lepowsky and H. Li, Introduction to vertex
operator algebras and their representations, Progress in Math.,
Vol. 227,  Birkh\"auser, Boston, 2004.



\bibitem[S]{Scz} M. Szczesny, Wakimoto modules for twisted affine
Lie algebras,  Math. Res. Lett. {\bf 9} (2002), no.4, 433-448.

\bibitem[W]{W-mod} M. Wakimoto, Fock representations of affine Lie
algebra $A_1^{(1)}$, Comm. Math. Phys. 104 (1986) 605-609.



\end{thebibliography}
\end{document}